\newcommand{\SL}{\operatorname{SL}}
\newcommand{\tr}{\operatorname{tr}}
\newcommand{\Rex}{\operatorname{Re}}
\newcommand{\Mod}[1]{\ (\mathrm{mod}\ #1)}
\newtheorem{lemma}{Lemma}
\newtheorem{corollary}{Corollary}
\newtheorem{proposition}{Proposition}
\journal{Journal Name}
\begin{document}

\begin{frontmatter}


\title{Real irreducible representations of $\SL_2(q)$ and their fixed point dimensions for cyclic subgroups}



\author{Piotr Mizerka}
\address{Adam Mickiewicz University in Pozna\'n, Poland}
\address{e-mail: piotr.mizerka@amu.edu.pl}

\begin{abstract}
We compute the characters of real irreducible representations of $\SL_2(q)$, the special linear group on $q$ letters, for an odd prime $q$. Moreover, we give the dimensions of these irreducible representations under the actions of cyclic subgroups of $\SL_2(q)$.
\end{abstract}

\begin{keyword}
irreducible character, representation theory, special linear group


\end{keyword}

\end{frontmatter}

\linenumbers

In what follows, $q$ shall denote an odd prime and we use an abbreviation $(m,n)$ for the greatest common divisor of the integers $m$ and $n$. We call an integer $n$ a \emph{quadratic residue} modulo $q$ if there exists an integer $r$ with $r^2\equiv n\Mod{q}$.
\section{Real irreducible representations of $\SL_2(q)$}
\begin{lemma}
\emph{The conjugacy classes of $\SL_2(q)$ are given by representatives $\mathbf{1},z,c,d,zc,zd,a^l,b^m$, where, for some $\nu\in\mathbb{F}_q^x$ with $\langle\nu\rangle=\mathbb{F}_q^x$,
\begin{center}
\begin{tabular}{ll*{1}{c}r}
$\mathbf{1}=\begin{pmatrix}1&0\\0&1\end{pmatrix}$     
& $z=\begin{pmatrix}-1&0\\0&-1\end{pmatrix}$ 
&$c=\begin{pmatrix}1&0\\1&1\end{pmatrix}$ \\
$d=\begin{pmatrix}1&0\\\nu&1\end{pmatrix}$     
&$a=\begin{pmatrix}\nu&0\\0&\nu^{-1}\end{pmatrix}$ 
&$b$ \\
\end{tabular}
\end{center}
where $b$ is any element of order $q+1$ and $1\leq l\leq (q-3)/2$, $1\leq m\leq (q-1)/2$. The conjugacy classes representatives have the following orders,
\begin{center}
\begin{tabular}{ll*{1}{c}r}
$|\mathbf{1}|=1$     
& $|z|=2$ 
&$|c|=|d|=q$ \\
$|zc|=|zd|=2q$     
&$|a^l|=\frac{q-1}{(l,q-1)}$ 
&$|b^m|=\frac{q+1}{(m,q+1)}$ \\
\end{tabular}
\end{center}}
\end{lemma}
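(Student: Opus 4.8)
The plan is to classify the elements of $\SL_2(q)$ by their rational canonical form over $\mathbb{F}_q$, which for a determinant-one matrix $g$ is governed entirely by the characteristic polynomial $x^2-(\tr g)x+1$, and then to refine the resulting $\GL_2(q)$-classes into $\SL_2(q)$-classes. Writing $t=\tr g$, so that the two eigenvalues are the roots of $x^2-tx+1$ and are mutually inverse, I would split into four regimes according to the discriminant $t^2-4$: first, $g$ scalar, forcing $g=\pm\mathbf{1}$ and giving the two central classes $\mathbf{1},z$ (the center of $\SL_2(q)$ is $\{\pm\mathbf{1}\}$ since $q$ is odd); second, $t=\pm 2$ with $g$ non-scalar, so $g$ is $\pm$ a transvection; third, $t^2-4$ a nonzero square, so $g$ is diagonalizable over $\mathbb{F}_q$ with distinct eigenvalues $\alpha,\alpha^{-1}\in\mathbb{F}_q^{\times}$; fourth, $t^2-4$ a non-square, so the characteristic polynomial is irreducible over $\mathbb{F}_q$ and $g$ lies in a non-split torus.

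For the semisimple regimes I would first note that every such $g$ is $\GL_2(q)$-conjugate to $\mathrm{diag}(\alpha,\alpha^{-1})$ in the split case, and to a power of a fixed generator of the cyclic order-$(q+1)$ non-split torus in the anisotropic case; existence of the latter torus follows from embedding the norm-one subgroup of $\mathbb{F}_{q^2}^{\times}$, which is cyclic of order $q+1$. The passage from $\GL_2(q)$ to $\SL_2(q)$ costs nothing here, because the swap $\alpha\leftrightarrow\alpha^{-1}$ is realized by $\begin{pmatrix}0&1\\-1&0\end{pmatrix}\in\SL_2(q)$; hence each such class is determined by the unordered pair $\{\alpha,\alpha^{-1}\}$. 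Letting $\alpha=\nu^l$ run through the split torus $\langle a\rangle$ of order $q-1$ and excluding $\alpha=\pm 1$ yields the $(q-3)/2$ classes $a^l$ with $1\le l\le(q-3)/2$; running through $\langle b\rangle$ of order $q+1$ and again excluding $\pm\mathbf{1}$ yields the $(q-1)/2$ classes $b^m$ with $1\le m\le(q-1)/2$.

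The main obstacle is the transvection regime: in $\GL_2(q)$ all nontrivial transvections form a single class, but in $\SL_2(q)$ this class splits in two. I would treat this by conjugating $\begin{pmatrix}1&0\\1&1\end{pmatrix}$ by $\mathrm{diag}(s,s^{-1})$ to obtain $\begin{pmatrix}1&0\\s^{-2}&1\end{pmatrix}$, which shows that $\SL_2(q)$-conjugacy alters the lower-left entry only by a square. Consequently the square and non-square values give the two distinct classes $c$ and $d$, and left-multiplying by the central $z$ produces the two trace-$(-2)$ classes $zc,zd$. I would then confirm distinctness and exhaustiveness of the whole list by a centralizer/counting check: the class sizes $1,1,(q^2-1)/2$ (four times), $q(q+1)$ for each of the $(q-3)/2$ split classes, and $q(q-1)$ for each of the $(q-1)/2$ non-split classes sum to $q(q^2-1)=|\SL_2(q)|$, which simultaneously verifies that nothing is missing and pins the number of classes at $q+4$.

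Finally, the orders are read off directly. Clearly $|\mathbf{1}|=1$ and $|z|=|-\mathbf{1}|=2$. The transvections $c,d$ have order $q$ because $\begin{pmatrix}1&0\\x&1\end{pmatrix}^n=\begin{pmatrix}1&0\\nx&1\end{pmatrix}$ and $\mathbb{F}_q$ has characteristic $q$; hence $zc=-c$ and $zd=-d$ have order $\mathrm{lcm}(2,q)=2q$. Since $a$ has order $q-1$ and $b$ has order $q+1$, the standard formula for the order of a power in a cyclic group gives $|a^l|=(q-1)/(l,q-1)$ and $|b^m|=(q+1)/(m,q+1)$, completing the statement.
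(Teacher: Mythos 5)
Your proposal is correct, but it takes a genuinely different route from the paper: for the classification itself the paper simply cites Dornhoff's Theorem~38.1 and proves only the order statements by direct computation (powers of $c$, $d$, $zc$, $zd$, plus the cyclic-power formula for $a^l$, $b^m$ --- your final paragraph matches this part essentially verbatim). You instead prove the classification from scratch: stratify by the characteristic polynomial $x^2-tx+1$ into central, $\pm$unipotent, split semisimple and non-split semisimple regimes; handle the semisimple classes via the Weyl element $\begin{pmatrix}0&1\\-1&0\end{pmatrix}\in\SL_2(q)$ identifying $\alpha\leftrightarrow\alpha^{-1}$ and the norm-one torus of order $q+1$; split the unipotent $\GL_2$-class into the two $\SL_2$-classes $(c)$, $(d)$ via the square-class of the lower-left entry; and close with the centralizer count $2+4\cdot\frac{q^2-1}{2}+\frac{q-3}{2}\,q(q+1)+\frac{q-1}{2}\,q(q-1)=q^3-q$. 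One step is stated more thinly than it should be: conjugating by $\mathrm{diag}(s,s^{-1})$ shows that every \emph{square} multiple of the lower-left entry is achieved, but not yet that \emph{only} squares arise; that needs either the general conjugation computation (which the paper in effect carries out later, in its Proposition~\ref{proposition:2}) or the observation that the $\GL_2$-centralizer of $c$ has square determinants. However, your counting check independently repairs this: exhaustiveness follows from the regime analysis, the class sizes follow from the centralizers, and equality of the sum with $|\SL_2(q)|$ then forces the listed classes to be pairwise distinct, in particular $(c)\neq(d)$. What each approach buys: the paper's citation is shorter and defers rigor to the literature, while your argument is self-contained and incidentally establishes facts the paper later quotes from Dornhoff anyway --- the class sizes used in Corollary~\ref{corollary:3} and the quadratic-residue mechanism behind Proposition~\ref{proposition:2} --- so it would make the paper's later sections less dependent on the external reference.
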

\begin{proof}
The first part of the proof is precisely the statement of \cite[Theorem 38.1]{Dornhoff1971}. We focus now on the representatives order. Obviously, $|\mathbf{1}|=1$ and $|z|=2$. For $c$ and $d$, we have,
\begin{center}
\begin{tabular}{ll*{1}{c}r}
$c^k=\begin{pmatrix}1&0\\k&1\end{pmatrix}$
&$d^k=\begin{pmatrix}1&0\\k\nu&1\end{pmatrix}$ ,    \\
\end{tabular}
\end{center}
hence $|c|=|d|=q$. Now, $zc=\begin{pmatrix}-1&0\\-1&-1\end{pmatrix}$ and $(zc)^k=\begin{pmatrix}(-1)^l&0\\(-1)^k\cdot k&(-1)^k\end{pmatrix}$, therefore $|zc|=2q$. For $zd$, we have \begin{align*}
zd=\begin{pmatrix}-1&0\\-\nu&-1\end{pmatrix}\text{ and } (zd)^k=\begin{pmatrix}(-1)^l&0\\(-1)^k\cdot k\nu&(-1)^k\end{pmatrix}\text{, so }|zd|=2q
\end{align*}
In general situation, if $g\in G$ is of order $n$, then $g^l$ is of order $n/(n,l)$. Since $a$ is of order $q-1$, and $b$ is of order $q+1$, we have $|a^l|=(q-1)/(l,q-1)$ and $|b^m|=(q+1)/(m,q+1)$.
\end{proof}
\begin{proposition}\label{proposition:1}
\emph{Let $p$ be an odd prime and $a\in\{1,\ldots,p-1\}$. Then $a$ is a quadratic residue modulo $p$ if and only if its multiplicative inverse modulo $p$, $a^{-1}$, is a quadratic residue modulo $p$ as well.} 
\end{proposition}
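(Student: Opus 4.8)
The plan is to prove both implications at once by exploiting the symmetry of the statement: since $(a^{-1})^{-1}\equiv a\Mod{p}$, it suffices to establish a single direction, say that $a$ being a quadratic residue forces $a^{-1}$ to be one as well, and then apply this to $a^{-1}$ in place of $a$ to obtain the converse for free.

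First I would record that because $a\in\{1,\ldots,p-1\}$ and $p$ is prime, $a$ is coprime to $p$, so its multiplicative inverse $a^{-1}$ modulo $p$ genuinely exists and is itself a nonzero residue. Next, assuming $a$ is a quadratic residue, I would fix an integer $r$ with $r^2\equiv a\Mod{p}$. Since $a\not\equiv 0\Mod{p}$, we also have $r\not\equiv 0\Mod{p}$, so $r$ is invertible modulo $p$ and $r^{-1}$ exists. The crux of the argument is then the single congruence
\begin{align*}
(r^{-1})^2\equiv (r^2)^{-1}\equiv a^{-1}\Mod{p},
\end{align*}
which exhibits $a^{-1}$ as the square of $r^{-1}$ and hence as a quadratic residue modulo $p$.

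Finally, to close the equivalence I would invoke the symmetry noted above: applying the implication just proved to $a^{-1}$ shows that if $a^{-1}$ is a quadratic residue then so is $(a^{-1})^{-1}\equiv a\Mod{p}$. There is no real obstacle here, as the whole statement reduces to the fact that inversion is a multiplicative involution on $\mathbb{F}_q^x$ that therefore maps squares to squares; the only point demanding a word of care is checking that $r$ is invertible, which is immediate once one observes $a\not\equiv 0\Mod{p}$. A more slick alternative, if the Legendre symbol were available, would be to use its multiplicativity together with $\left(\tfrac{a}{p}\right)=\pm1$ to write $\left(\tfrac{a^{-1}}{p}\right)=\left(\tfrac{a}{p}\right)^{-1}=\left(\tfrac{a}{p}\right)$, but the elementary squaring argument above is self-contained and avoids introducing extra machinery.
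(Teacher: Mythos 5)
Your proof is correct, and it takes a genuinely different route from the paper. The paper argues via Euler's criterion: writing $r_1$ and $r_2$ for the residues of $a^{(p-1)/2}$ and $(a^{-1})^{(p-1)/2}$ modulo $p$, it notes that $r_1r_2\equiv(aa^{-1})^{(p-1)/2}\equiv 1\Mod{p}$, that each of $r_1,r_2$ is $\pm 1$ by the criterion, and hence that $r_1=1$ forces $r_2=1$. You instead observe directly that a square root of $a$ inverts to a square root of $a^{-1}$, via $(r^{-1})^2\equiv(r^2)^{-1}\equiv a^{-1}\Mod{p}$, and then dispose of the converse by the involution $(a^{-1})^{-1}\equiv a\Mod{p}$; the only point needing care, the invertibility of $r$, you check correctly from $a\not\equiv 0\Mod{p}$. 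Your argument is both more elementary and more general: it nowhere uses that $p$ is prime or odd, only that $a$ is a unit, so it proves the statement for units modulo an arbitrary modulus --- indeed in any abelian group, since inversion is an automorphism and so maps the set of squares onto itself --- whereas Euler's criterion is specific to odd prime moduli. What the paper's approach buys is consistency of technique: the same Euler-criterion computation with $(p-1)/2$-th powers is reused immediately afterwards in the proof of Proposition~\ref{proposition:2} to decide the residuosity of $(2\nu^{-1})^{-1}$, so the author's choice keeps the toolkit uniform even though it is heavier than necessary for this particular statement. Your Legendre-symbol remark is also sound (multiplicativity plus $\left(\tfrac{a}{p}\right)=\pm 1$ gives $\left(\tfrac{a^{-1}}{p}\right)=\left(\tfrac{a}{p}\right)$), but as you say it imports machinery the squaring argument avoids.
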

\begin{proof}
Let $r_1$ and $r_2$ be the remainders modulo $p$ of $a^{(p-1)/2}$ and  $(a^{-1})^{(p-1)/2}$ respectively and assume that $a$ is a quadratic residue modulo $p$. Using the Euler's criterion, \cite[pp. 164 Euler's criterion]{Stillwell2003}, we conclude then that $r_1=1$. But 
$$
r_1r_2\equiv a^{(p-1)/2}(a^{-1})^{(p-1)/2}\equiv(aa^{-1})^{(p-1)/2}\equiv 1\Mod{p}
$$
Since $r_2\in\{-1,1\}$, by Euler's criterion, and $r_1=1$, this means that $r_2=1$ and $a^{-1}$ must be a quadratic residue modulo $p$. The converse implication is analogous.
\end{proof}
\begin{proposition}\label{proposition:2}
\emph{$\mathbf{1}^2,z^2\in(\mathbf{1})$. If $2$ is a quadratic residue modulo $q$, then $c^2,(zc)^2\in(c)$ and $d^2,(zd)^2\in(d)$. Otherwise, $c^2,(zc)^2\in(d)$ and $d^2,(zd)^2\in(c)$.}
\end{proposition}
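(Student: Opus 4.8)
The plan is to reduce the whole statement to one computational fact about conjugating lower unipotent matrices by diagonal elements of $\SL_2(q)$. First I would dispose of the trivial part: $\mathbf{1}^2=\mathbf{1}\in(\mathbf{1})$, and since $z=-\mathbf{1}$ we have $z^2=\mathbf{1}\in(\mathbf{1})$. Next I would record the relevant squares using the power formulas computed in the proof of Lemma~1. From $c^k=\begin{pmatrix}1&0\\k&1\end{pmatrix}$ and $d^k=\begin{pmatrix}1&0\\k\nu&1\end{pmatrix}$ I obtain $c^2=\begin{pmatrix}1&0\\2&1\end{pmatrix}$ and $d^2=\begin{pmatrix}1&0\\2\nu&1\end{pmatrix}$; and from the formulas for $(zc)^k,(zd)^k$, with $k=2$ making the sign $(-1)^2=1$, I get $(zc)^2=c^2$ and $(zd)^2=d^2$ \emph{as matrices}. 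Thus $(zc)^2$ always lies in the same class as $c^2$ and $(zd)^2$ in the same class as $d^2$, so it suffices to locate $c^2$ and $d^2$. Note also that $2\neq 0$ in $\mathbb{F}_q$ (as $q$ is odd), so $c^2$ and $d^2$ are genuine nontrivial unipotent elements.

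The key step is the conjugation identity: for $\lambda\in\mathbb{F}_q^x$, conjugating $\begin{pmatrix}1&0\\t&1\end{pmatrix}$ by $\operatorname{diag}(\lambda,\lambda^{-1})\in\SL_2(q)$ produces $\begin{pmatrix}1&0\\\lambda^{-2}t&1\end{pmatrix}$. Since $\lambda^{-2}$ runs over all quadratic residues as $\lambda$ runs over $\mathbb{F}_q^x$, the matrices $\begin{pmatrix}1&0\\s&1\end{pmatrix}$ and $\begin{pmatrix}1&0\\t&1\end{pmatrix}$ (with $s,t\in\mathbb{F}_q^x$) are conjugate whenever $st^{-1}$ is a quadratic residue. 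Invoking Proposition~\ref{proposition:1} to replace $t^{-1}$ by $t$, together with multiplicativity of the quadratic character (equivalently Euler's criterion), this says they are conjugate whenever $s$ and $t$ are both residues or both non-residues. As $c$ corresponds to $s=1$ (a residue), $d$ to $s=\nu$ (a non-residue), and by Lemma~1 these are the only two nontrivial unipotent classes, the placement of $c^2$ (the case $t=2$) and $d^2$ (the case $t=2\nu$) is then governed entirely by the quadratic character of $2$.

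Finally I would split on whether $2$ is a quadratic residue modulo $q$. If it is, then $t=2$ matches $s=1$, so $c^2\in(c)$, while $t=2\nu$ matches $s=\nu$, so $d^2\in(d)$. If $2$ is a non-residue, then $2$ matches $\nu$, giving $c^2\in(d)$, whereas $2\nu$ is then a residue matching $1$, giving $d^2\in(c)$. Transporting the matrix identities $(zc)^2=c^2$ and $(zd)^2=d^2$ through these assignments yields the full statement. The only genuine content is the diagonal conjugation identity and the bookkeeping of quadratic characters; the main obstacle is ensuring that, when $2$ is a non-residue, the argument actually places $c^2$ \emph{inside} $(d)$ rather than merely outside $(c)$. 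I would handle this by exhibiting the explicit conjugating matrix $\operatorname{diag}(\lambda,\lambda^{-1})$ with $\lambda^{-2}=2\nu^{-1}$ (which exists precisely because $2\nu^{-1}$ is a residue there), so that each class membership is demonstrated directly rather than by exclusion.
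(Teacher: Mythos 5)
Your proposal is correct, and it takes a recognizably different route from the paper's, even though both ultimately rest on the same residue dichotomy. The paper argues pairwise: it solves the intertwining equation $c^2M=Md$ for a general unknown $M\in\SL_2(q)$, finds the constraint that $M$ be lower triangular with diagonal $(a,2\nu^{-1}a)$, reads solvability off the determinant condition $2\nu^{-1}a^2=1$ via Euler's criterion and $\nu^{(q-1)/2}\equiv-1\ (\mathrm{mod}\ q)$, repeats the parallel computation for $d^2$ versus $c$, and settles the complementary memberships ($c^2\in(c)$ when $2$ is a residue, etc.) by exclusion, using that $(c)$ and $(d)$ are the only classes of order-$q$ elements. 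You instead isolate one reusable identity -- conjugation by $\operatorname{diag}(\lambda,\lambda^{-1})$ multiplies the corner entry of a lower unipotent by $\lambda^{-2}$ -- so that lower unipotents with corner entries in the same square class are conjugate, and all four placements then follow from bookkeeping with the quadratic character of $2$, with an explicit conjugator in every case. Your route buys directness (no exclusion step is needed, as you correctly anticipate in your last paragraph) and economy (one computation covers $c^2$ and $d^2$ in both residue cases); note that your diagonal conjugator is exactly the paper's solution family with its free off-diagonal parameter set to zero, e.g.\ $\lambda^{-2}=2\nu^{-1}$ matches the paper's $\det M=2\nu^{-1}a^2=1$, so nothing is lost in restricting to diagonal matrices. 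Two small points deserve a line in a final write-up: that $\nu$ is a non-residue (immediate from $\langle\nu\rangle=\mathbb{F}_q^x$, which forces $\nu^{(q-1)/2}=-1$), and that the matrix identities $(zc)^2=c^2$ and $(zd)^2=d^2$ follow from centrality of $z$ together with $z^2=\mathbf{1}$, rather than only from the power formulas of Lemma~1.
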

\begin{proof}
Obviously, $\mathbf{1}^2,z^2\in(\mathbf{1})$. Let us establish when $c^2\in(d)$. We have to find $\begin{pmatrix}a&b\\c&d
\end{pmatrix}\in\SL_2(q)$ with
$$
c^2\begin{pmatrix}a&b\\c&d
\end{pmatrix}=\begin{pmatrix}a&b\\c&d
\end{pmatrix}d
$$
Hence we must have 
\begin{align*}
\begin{pmatrix}1&0\\2&1
\end{pmatrix}\begin{pmatrix}a&b\\c&d
\end{pmatrix}=&\begin{pmatrix}a&b\\c&d
\end{pmatrix}\begin{pmatrix}1&0\\\nu&1
\end{pmatrix}\\
\Leftrightarrow \begin{pmatrix}a&b\\2a+c&2b+d
\end{pmatrix}=&\begin{pmatrix}a+\nu b&b\\c+\nu d&d
\end{pmatrix}\Leftrightarrow b=0, d=2\nu^{-1}a
\end{align*}
Taking $a,b,c,d$ as above, we have
$$
\det\begin{pmatrix}
a&b\\c&d
\end{pmatrix}=\det\begin{pmatrix}a&0\\c&2\nu^{-1}a
\end{pmatrix}=2\nu^{-1}a^2
$$
Hence, we can find $a$ with $\det\begin{pmatrix}
a&b\\c&d
\end{pmatrix}=1$ if and only if $(2\nu^{-1})^{-1}$ is a quadratic residue modulo $q$. Therefore, 
$$
((2\nu^{-1})^{-1})^{\frac{q-1}{2}}\equiv 1\Mod{q}\Leftrightarrow \nu^{\frac{q-1}{2}}(2^{-1})^{\frac{q-1}{2}}\equiv 1\Mod{q}
$$
Since $\langle \nu\rangle=\mathbb{F}_q^x$, it follows that $\nu^{(q-1)/2}\equiv -1 (q)$. Therefore, we can find an appropriate $a$ iff
$$
(2^{-1})^{\frac{q-1}{2}}\equiv -1\Mod{q},
$$
which holds iff $2^{-1}$ is not a quadratic residue modulo $q$. This, however, is true whenever $2$ is not a quadratic residue modulo $q$ (conlcusion from Proposition \ref{proposition:1}). On the other hand, $(c)$ and $(d)$ are the only conjugacy classes with elements of order $q$, which is the order of $c^2$ as well. Thus, if $c^2$ does not belong to one of them, it has to belong to the other. Hence, if $2$ is not a quadratic residue modulo $q$, then $c^2\in(d)$, otherwise $c^c\in(c)$. 

Now, consider $d^2=\begin{pmatrix}
1&0\\2\nu&1
\end{pmatrix}$. It belongs to $(c)$ if we can find $\begin{pmatrix}
a&b\\c&d
\end{pmatrix}\in\SL_2(q)$ such that 
\begin{align*}
d^2\begin{pmatrix}
a&b\\c&d
\end{pmatrix}=&\begin{pmatrix}
a&b\\c&d
\end{pmatrix}c\\
\Leftrightarrow \begin{pmatrix}
a&b\\2\nu a+c&2\nu b+d
\end{pmatrix}=&\begin{pmatrix}
a+b&b\\c+d&d
\end{pmatrix}\Leftrightarrow b=0, d=2\nu
a\end{align*}
We must thus have
$
1=\det\begin{pmatrix}
a&b\\c&d
\end{pmatrix}=\begin{pmatrix}
a&0\\c&2\nu a
\end{pmatrix}=2\nu a^2
$. Proceeding analogously as before, we infer that $d^2\in(c)$ if $2$ is not a quadratic residue modulo $q$ and $d^2\in(d)$ otherwise.

Since $(zc)^2=c^2$ and $(zd)^2=d^2$, we infer that $(zc)^2\in(c)$ and $(zd)^2\in (d)$ if $2$ is a quadratic residue modulo $q$ and $(zc)^2\in(d)$ and $(zd)^2\in(c)$ otherwise.  
\end{proof}
\begin{proposition}
\emph{We have
$$(c)\cup(c)^{-1}=\begin{cases} (c)\text{ if } q\equiv 1\Mod{4} \\ (c)\cup(d) \text{ if }q\equiv 3\Mod{4}\end{cases} $$
and
$$(d)\cup(d)^{-1}=\begin{cases} (d) \text{   if }q\equiv 1\Mod{4} \\ (c)\cup(d) \text{ if }q\equiv 3\Mod{4}\end{cases}$$
}
\end{proposition}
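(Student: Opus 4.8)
The plan is to reduce the whole statement to two short conjugacy computations of the type carried out in the proof of Proposition \ref{proposition:2}. The key structural observation is that inversion $g\mapsto g^{-1}$ commutes with conjugation (since $(hgh^{-1})^{-1}=hg^{-1}h^{-1}$), so it permutes conjugacy classes; hence $(c)^{-1}$ and $(d)^{-1}$ are again single conjugacy classes, consisting of elements of the same order as $c$ and $d$, namely order $q$ by the Lemma. But $(c)$ and $(d)$ are the only conjugacy classes of $\SL_2(q)$ whose elements have order $q$. Therefore each of $(c)^{-1}$, $(d)^{-1}$ coincides with either $(c)$ or $(d)$, and the problem collapses to deciding, for $c^{-1}$ and for $d^{-1}$, which of the two classes it lands in; the displayed formulas then follow by taking unions.

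First I would treat $c$. Writing $c^{-1}=\begin{pmatrix}1&0\\-1&1\end{pmatrix}$, I test whether $c^{-1}\in(c)$ by searching, exactly as in Proposition \ref{proposition:2}, for $g=\begin{pmatrix}a&b\\c&d\end{pmatrix}\in\SL_2(q)$ with $c^{-1}g=gc$. Comparing the two products entrywise forces $b=0$ and $d=-a$, whence $\det g=-a^2$. Thus a conjugating element of determinant $1$ exists if and only if $-1$ is a quadratic residue modulo $q$, that is, iff $(-1)^{(q-1)/2}\equiv 1\Mod{q}$, which holds precisely when $q\equiv 1\Mod{4}$. Consequently $c^{-1}\in(c)$ when $q\equiv 1\Mod{4}$ and $c^{-1}\in(d)$ when $q\equiv 3\Mod{4}$, yielding the first formula. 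The computation for $d$ is entirely parallel: solving $d^{-1}g=gd$ again collapses to $b=0$, $d=-a$ and the identical determinant condition $\det g=-a^2=1$, so $d^{-1}\in(d)$ iff $q\equiv 1\Mod{4}$ and $d^{-1}\in(c)$ otherwise, giving the second formula.

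I do not expect a genuine obstacle here, since both steps are elementary linear-algebra verifications combined with the Euler-criterion characterization of when $-1$ is a square. The one point worth flagging is the consistency of the two halves: when $q\equiv 3\Mod{4}$ inversion must swap $(c)$ and $(d)$, and indeed both computations deliver $c^{-1}\in(d)$ and $d^{-1}\in(c)$ simultaneously, in agreement with the general fact that $\bigl((c)^{-1}\bigr)^{-1}=(c)$; so the two cases reinforce rather than contradict each other, and both reduce the relevant union to $(c)\cup(d)$.
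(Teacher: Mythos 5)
Your proof is correct and takes essentially the same route as the paper: the identical entrywise computation reducing $c^{-1}g=gc$ (and likewise $d^{-1}g=gd$) to $b=0$, $d=-a$, hence $\det g=-a^2$, settled by whether $-1$ is a quadratic residue modulo $q$, i.e.\ $q\equiv 1\Mod{4}$. Your preliminary observation that inversion permutes conjugacy classes and preserves order, so that $(c)^{-1}$ and $(d)^{-1}$ must each equal $(c)$ or $(d)$, makes explicit the step needed for the $q\equiv 3\Mod{4}$ conclusion that the paper leaves implicit --- a small improvement rather than a different approach.
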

\begin{proof}
We have $c^{-1}=\begin{pmatrix}
1&0\\-1&1
\end{pmatrix}$. We want to verify when $c$ is conjugate to $c^{-1}$. This is the case when we can find a matrix $\begin{pmatrix}
a&b\\c&d
\end{pmatrix}\in\SL_2(q)$ such that 
\begin{align*}
c^{-1}\begin{pmatrix}
a&b\\c&d
\end{pmatrix}=&\begin{pmatrix}
a&b\\c&d
\end{pmatrix}c\Leftrightarrow\begin{pmatrix}
1&0\\-1&1
\end{pmatrix}\begin{pmatrix}
a&b\\c&d
\end{pmatrix}=\begin{pmatrix}
a&b\\c&d
\end{pmatrix}\begin{pmatrix}
1&0\\1&1
\end{pmatrix}\\
\Leftrightarrow\begin{pmatrix}
a&b\\-a+c&-b+d
\end{pmatrix}=&\begin{pmatrix}
a+b&b\\
c+d&d
\end{pmatrix}\Leftrightarrow d=-a, b=0
\end{align*}
Thus, $1=\det\begin{pmatrix}
a&b\\c&d
\end{pmatrix}=\det\begin{pmatrix}
a&0\\-a+c&-a
\end{pmatrix}=-a^2=1$
Hence, we can find an appropriate $a$ iff $-1$ is a quadratic residue modulo $q$, that is when $q\equiv 1\Mod{4}$.

We proceed similarly for $d$,
\begin{align*}
(d)=(d^{-1})\Leftrightarrow d^{-1}\begin{pmatrix}
a&b\\c&d
\end{pmatrix}=&\begin{pmatrix}
a&b\\c&d
\end{pmatrix}d
\Leftrightarrow\begin{pmatrix}
1&0\\-\nu&1
\end{pmatrix}\begin{pmatrix}
a&b\\c&d
\end{pmatrix}=\begin{pmatrix}
a&b\\c&d
\end{pmatrix}\begin{pmatrix}
1&0\\\nu&1
\end{pmatrix}\\
\Leftrightarrow\begin{pmatrix}
a&b\\-\nu a+c&-\nu b+d
\end{pmatrix}=&\begin{pmatrix}
a+\nu b&b\\
c+\nu d&d
\end{pmatrix}
\Leftrightarrow d=-a, b=0
\end{align*}
and
$$
1=\det\begin{pmatrix}
a&b\\c&d
\end{pmatrix}=\det\begin{pmatrix}
a&0\\-\nu a+c&-a
\end{pmatrix}=-a^2
$$
which is possible whenever $-1$ is a quadratic residue modulo $q$, i. e. when $q\equiv 1\Mod{4}$. Thus, $d$ is conjugate to $d^{-1}$ if and only if $q\equiv 1\Mod{4}$.  
\end{proof}
\begin{proposition}
\emph{We have \begin{align*}
(zc)\cup(zc)^{-1}=\begin{cases} (zc) \text{ if }q\equiv 1\Mod{4} \\ (zc)\cup(zd) \text{ if }q\equiv 3\Mod{4}\end{cases}
\end{align*}
and
\begin{align*}
(zd)\cup(zd)^{-1}=\begin{cases} (zd) \text{ if }q\equiv 1\Mod{4} \\ (zc)\cup(zd) \text{ if }q\equiv 3\Mod{4}\end{cases}
\end{align*}
}
\end{proposition}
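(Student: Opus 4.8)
The plan is to exploit that $z=\begin{pmatrix}-1&0\\0&-1\end{pmatrix}$ is central in $\SL_2(q)$ and satisfies $z^2=\mathbf{1}$; this lets me deduce the claim from the preceding proposition instead of repeating a matrix computation from scratch. Because $z$ is central, for every $P\in\SL_2(q)$ one has $P(zc)P^{-1}=z\,(PcP^{-1})$, so conjugating $zc$ has the same effect as conjugating $c$ up to the fixed central factor $z$. Hence, as $P$ runs over the group, the class of $zc$ is the image of the class of $c$ under left multiplication by $z$: writing $z\cdot(c)=\{zh:h\in(c)\}$, this reads $(zc)=z\cdot(c)$, and similarly $(zd)=z\cdot(d)$. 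This is genuinely a conjugacy class because $g\mapsto zg$ is a conjugation-equivariant bijection of the group.

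Next I would record what happens to inverses. From $z^2=\mathbf{1}$ we get $z^{-1}=z$, so $(zc)^{-1}=z^{-1}c^{-1}=z\,c^{-1}$, whose class is $z\cdot(c)^{-1}$, using that the set of inverses of a conjugacy class is again a conjugacy class (that of $c^{-1}$). Combining the two observations,
\begin{align*}
(zc)\cup(zc)^{-1}=z\cdot(c)\ \cup\ z\cdot(c)^{-1}=z\cdot\big((c)\cup(c)^{-1}\big),
\end{align*}
and the identical manipulation gives $(zd)\cup(zd)^{-1}=z\cdot\big((d)\cup(d)^{-1}\big)$.

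It then remains only to substitute the previous proposition and distribute $z$ over the union. If $q\equiv 1\Mod{4}$ then $(c)\cup(c)^{-1}=(c)$, so $(zc)\cup(zc)^{-1}=z\cdot(c)=(zc)$; if $q\equiv 3\Mod{4}$ then $(c)\cup(c)^{-1}=(c)\cup(d)$, so $(zc)\cup(zc)^{-1}=z\cdot(c)\cup z\cdot(d)=(zc)\cup(zd)$. Repeating verbatim with $d$ in place of $c$ yields the second displayed formula in the same two cases.

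As a consistency check, aligned with the style of the preceding proofs, I could instead solve $(zc)^{-1}P=P(zc)$ directly for $P=\begin{pmatrix}a&b\\c&d\end{pmatrix}\in\SL_2(q)$; this forces $b=0$ and $d=-a$, so $\det P=-a^2=1$ is solvable exactly when $-1$ is a quadratic residue modulo $q$, i.e. when $q\equiv 1\Mod{4}$. For $q\equiv 3\Mod{4}$, Lemma 1 shows $(zc)$ and $(zd)$ are the only classes of elements of order $2q$, so the order-$2q$ element $(zc)^{-1}$ must lie in $(zd)$. I do not anticipate a genuine obstacle: the single point needing care is the bookkeeping identity $(zc)=z\cdot(c)$, after which the statement is inherited directly from the previous proposition.
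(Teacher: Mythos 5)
Your proof is correct, but it takes a genuinely different route from the paper. The paper disposes of this proposition in one line, declaring it ``analogous to the proof of the previous proposition,'' i.e.\ it implicitly repeats the direct computation: solve $(zc)^{-1}P=P(zc)$ for $P=\begin{pmatrix}a&b\\c&d\end{pmatrix}\in\SL_2(q)$, find $b=0$, $d=-a$, and reduce to solvability of $-a^2=1$, hence to $-1$ being a quadratic residue modulo $q$ --- exactly the calculation you relegate to your consistency check. Your primary argument instead observes that $z$ is central with $z^2=\mathbf{1}$, so conjugation satisfies $P(zc)P^{-1}=z\,(PcP^{-1})$, giving $(zc)=z\cdot(c)$, $(zd)=z\cdot(d)$, and $(zc)^{-1}=zc^{-1}$ with class $z\cdot(c)^{-1}$; the proposition then follows formally from the previous one by distributing $z$ over the union, with no new matrix work. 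This buys conceptual economy and generality (the identity $(zg)\cup(zg)^{-1}=z\cdot\bigl((g)\cup(g)^{-1}\bigr)$ holds for any central involution $z$ and any $g$, and makes transparent why the two propositions are ``the same'' statement), whereas the paper's route is self-contained and independent of the earlier proposition. One small point your reduction handles silently but correctly: in the case $q\equiv 3\Mod{4}$ you need $z\cdot(d)=(zd)$, which follows from the same centrality observation, and your fallback appeal to Lemma~1 (that $(zc)$ and $(zd)$ are the only classes of order-$2q$ elements) is a valid alternative way to locate $(zc)^{-1}$ there.
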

\begin{proof}
The proof is analogous to the proof of the previous proposition.
\end{proof}
For a group $G$, let us define the subset containing the conjugacy class of an element $g\in G$ and the conjugacy class of its inverse, $(g^{-1})$, to be the \emph{real conjugacy class} of the element $g\in G$. The following lemma characterizes the real conjugacy classes of $\SL_2(q)$, 
\begin{lemma}\label{lemma:2}
\emph{If $q\equiv 1\Mod{4}$, then all the conjugacy classes of $\SL_2(q)$ constitute the real conjugacy classes (i. e. any element is conjugate to its inverse). Otherwise, we have $q+2$ real conjugacy classes, $(\mathbf{1}), (z), (c)\cup(d), (zc)\cup(zd),(a^l),(b^m)$ for $1\leq l\leq (q-3)/2$ and $1\leq m\leq (q-1)/2$. }
\end{lemma}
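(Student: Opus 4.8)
The plan is to count the real conjugacy classes by determining, for each representative in Lemma~1, whether it is conjugate to its inverse, and to group classes accordingly. The small-order representatives $\mathbf{1}$ and $z$ are their own inverses (being $\pm I$), so $(\mathbf{1})$ and $(z)$ are always real conjugacy classes on their own. The behaviour of the unipotent classes $(c),(d),(zc),(zd)$ has already been settled in the two preceding propositions: when $q\equiv 1\Mod 4$ each of these four classes is self-inverse, and when $q\equiv 3\Mod 4$ they pair up as $(c)\cup(d)$ and $(zc)\cup(zd)$. So the only genuine work remaining concerns the split semisimple classes $(a^l)$ and the nonsplit classes $(b^m)$.

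First I would handle $a^l=\begin{pmatrix}\nu^l&0\\0&\nu^{-l}\end{pmatrix}$. Its inverse is $\begin{pmatrix}\nu^{-l}&0\\0&\nu^{l}\end{pmatrix}$, which is conjugate to $a^l$ simply by the permutation matrix $w=\begin{pmatrix}0&1\\-1&0\end{pmatrix}\in\SL_2(q)$, since conjugation by $w$ swaps the two diagonal entries. Hence every class $(a^l)$ is self-inverse regardless of $q\bmod 4$. For the nonsplit torus, $b$ has order $q+1$ and generates a cyclic subgroup isomorphic to the norm-one elements of $\mathbb{F}_{q^2}^\times$; the inverse $b^{-m}=b^{(q+1)-m}$ lies in this same torus, and the eigenvalues of $b^m$ are $\zeta^m,\zeta^{qm}=\zeta^{-m}$ for a primitive $(q+1)$-root of unity $\zeta$. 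Since the eigenvalues of $b^{-m}$ are $\zeta^{-m},\zeta^{m}$, the two matrices have the same characteristic polynomial over $\mathbb{F}_q$; I would invoke the standard fact that in $\SL_2(q)$ a nonsplit semisimple element is determined up to conjugacy by its characteristic polynomial (equivalently by its eigenvalue pair $\{\zeta^m,\zeta^{-m}\}$), so $(b^m)=(b^{-m})$ for all $m$. Thus the torus classes are always real conjugacy classes, independent of the residue of $q$ modulo $4$.

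With these facts assembled the count is immediate. When $q\equiv 1\Mod 4$ every representative is conjugate to its inverse, so all conjugacy classes are already real conjugacy classes, giving the first assertion. When $q\equiv 3\Mod 4$, the classes $(\mathbf{1}),(z),(a^l),(b^m)$ remain self-inverse, while the four unipotent classes collapse to the two real classes $(c)\cup(d)$ and $(zc)\cup(zd)$. Counting: two singletons $(\mathbf{1}),(z)$, two merged unipotent classes, $(q-3)/2$ classes $(a^l)$ and $(q-1)/2$ classes $(b^m)$, for a total of $2+2+\tfrac{q-3}{2}+\tfrac{q-1}{2}=4+\tfrac{2q-4}{2}=4+(q-2)=q+2$ real conjugacy classes, as claimed.

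The main obstacle is the nonsplit-torus step: unlike the diagonal case, there is no visible permutation matrix realising the conjugacy of $b^m$ and $b^{-m}$, and one must argue through the identification of the nonsplit torus with $\ker(\mathrm{Norm}\colon\mathbb{F}_{q^2}^\times\to\mathbb{F}_q^\times)$ and the fact that the Frobenius automorphism $x\mapsto x^q$ sends $\zeta$ to $\zeta^{-1}$, so that passing to the inverse merely permutes the conjugate eigenvalue pair. I would present this as the key lemma and lean on the classification of conjugacy classes in Lemma~1 (which already asserts that $(b^m)$ for distinct $m$ in the stated range are distinct) to conclude that matching eigenvalue pairs forces conjugacy.
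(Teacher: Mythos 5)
Your proposal is correct, but it proves the lemma by a genuinely different route than the paper. The paper's proof is global and character-theoretic: it takes the complex character table of $\SL_2(q)$ from Dornhoff, observes that the only possibly non-real irreducible characters are $\xi_1,\xi_2,\eta_1,\eta_2$ (non-real exactly when $\epsilon=-1$, i.e.\ $q\equiv 3\Mod{4}$), and then invokes the Brauer permutation lemma (\cite[23.1 Theorem]{Liebeck2001}): the permutation matrix $P$ with $\overline{X}=PX$ has trace equal to the number of self-inverse classes, giving $\tr P=q+4$ when $q\equiv 1\Mod{4}$ and $\tr P=q$ when $q\equiv 3\Mod{4}$. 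Combined with the two preceding propositions (which identify $(c),(d),(zc),(zd)$ as exactly the non-self-inverse classes when $q\equiv 3\Mod{4}$), a count finishes the proof; in particular the paper never exhibits a conjugating element for $(a^l)$ or $(b^m)$, but deduces their reality from the numerology. You instead argue element-by-element: the Weyl element $w=\begin{pmatrix}0&1\\-1&0\end{pmatrix}$ explicitly conjugates $a^l$ to $a^{-l}$, and for the nonsplit torus you reduce to the fact that a class $(b^m)$ is determined by its eigenvalue pair $\{\zeta^m,\zeta^{-m}\}$. You correctly flag this last fact as the only real obstacle, and your sketch for closing it is sound and can be made fully rigorous using only Lemma~1: since $|b^{-m}|$ divides $q+1$ and is at least $3$, while $(q+1,q-1)=2$ and $(q+1,q)=1$, order considerations force $b^{-m}$ to lie in some class $(b^{m'})$ with $1\leq m'\leq(q-1)/2$; equality of traces then gives $\zeta^{m'}+\zeta^{-m'}=\zeta^{m}+\zeta^{-m}$, so $m'\equiv\pm m\Mod{q+1}$, and the range restriction forces $m'=m$ (alternatively, surjectivity of the norm map on the nonsplit torus shows the $\GL_2(q)$-class does not split in $\SL_2(q)$). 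The trade-off: the paper's argument is shorter given that the character table is already on hand, while yours is more elementary and self-contained, and as a bonus directly establishes $(a^l)=(a^{q-1-l})$ and $(b^m)=(b^{q+1-m})$ --- facts the paper only extracts afterwards, in the proof of Corollary~\ref{corollary:2}, as consequences of Lemma~\ref{lemma:2}.
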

\begin{proof}
By \cite[Theorem 38.1]{Dornhoff1971}, the following table is the complex character table of $\SL_2(q)$,

\begin{center}
\begin{tabular}{l|l*{4}{c}r}
            &  $\mathbf{1}$& $z$ & $c$ & $d$ & $a^l$  & $b^m$ \\
\hline
$\mathds{1}$ & 1 & 1 & 1 & 1 & 1 & 1  \\
$\psi$   & $q$ & $q$ & $0$ & $0$ & $1$ & $-1 $ \\
$\chi_i$   & $q+1$ & $(-1)^i(q+1)$ & $1$ & $1$ & $\nu_{q-1}^{il}$ &$ 0$  \\
$\theta_j$    & $q-1$ & $(-1)^j(q-1)$ & $-1$ & $-1$ &$ 0$ & $-\nu_{q+1}^{jm}$  \\
$\xi_1$     & $\frac{q+1}{2}$ & $\frac{\epsilon(q+1)}{2}$ & $\frac{1+\sqrt{\epsilon q}}{2}$ & $\frac{1-\sqrt{\epsilon q}}{2}$ & $(-1)^l$ & 0 \\
$\xi_2$     & $\frac{q+1}{2}$ & $\frac{\epsilon(q+1)}{2}$ & $ \frac{1-\sqrt{\epsilon q}}{2} $&$ \frac{1+\sqrt{\epsilon q}}{2} $& $(-1)^l $& $0$ \\
$\eta_1$    &$ \frac{q-1}{2} $&$ -\frac{\epsilon(q-1)}{2} $&$ \frac{-1+\sqrt{\epsilon q}}{2} $&$ \frac{-1-\sqrt{\epsilon q}}{2} $& $0$ & $(-1)^{m+1}$ \\
$\eta_2$     &$ \frac{q-1}{2} $&$ -\frac{\epsilon(q-1)}{2} $&$ \frac{-1-\sqrt{\epsilon q}}{2} $&$ \frac{-1+\sqrt{\epsilon q}}{2} $& $0$ & $(-1)^{m+1}$ 
\end{tabular}
\end{center}
where $1\leq i,l\leq (q-3)/2$, $1\leq j,m\leq(q-1)/2$, $\epsilon=(-1)^{(q-1)/2}$, $\zeta_r=\exp{(2\pi i/r)}$, $\nu_r^s=\zeta_r^s+\zeta_r^{-s}$ and $\chi(zc)=\frac{\chi(z)}{\chi(1)}\cdot\chi(c)$ and $\chi(zd)=\frac{\chi(z)}{\chi(1)}\cdot\chi(d)$ for any complex irreducible character $\chi$. Therefore, the only complex irreducible characters taking nonreal values can be $\xi_1,\xi_2,\eta_1,\eta_2$. If it is the case, then all of those characters take some non-real value. This happens precisely in the case $q\equiv 3\Mod{4}$ (i. e. $\epsilon=-1)$. Consider the matrix $P$ such that $\overline{X}=PX$, where $X$ is the character table of $\SL_2(q)$ and $\overline{X}$ denotes the conjugate matrix to $X$. By \cite[23.1 Theorem]{Liebeck2001}, it follows that $P$ is a permutation matrix with trace equal to the number of conjugacy classes which elements are conjugate to their inverses. In our case,
\begin{center}
\begin{tabular}{ll*{1}{c}r}
$\tr P=\begin{cases} q+4 \text{ if }q\equiv 1\Mod{4} \\ q \text{ if }q\equiv 3\Mod{4}\end{cases} $

\end{tabular}
\end{center}
Hence, if $q\equiv 1\Mod{4}$, the assertion follows. If $q\equiv 3\Mod{4}$, then, by the previous two facts, $(c)\cup(d)$ and $(zc)\cup(zd)$ are two different real conjugacy classes. This, in connection with the fact that we have precisely $\tr P$ conjugacy classes with elements conjugate to their own inverses, shows that the remaining conjugacy classes have this property (we have $q+4$ conjugacy classes and $(c),(d),(zc),(zd)$ are the only classes which elements are not conjugate to their inverses).  
\end{proof} 
\begin{corollary}\label{corollary:2}
\emph{In the sequence of conjugacy classes,
$
(a^2),(a^4),\ldots,(a^{q-3}),
$
there are precisely two classes $(a^l)$ for each even $2\leq l\leq (q-3)/2$ and the class $(z)$, only in the case $q\equiv 1 \Mod{4}$. Similarly, in the sequence $
(b^2),(b^4),\ldots,(b^{q-1}),
$, there are precisely two classes $(b^m)$ for each even $2\leq m\leq (q-1)/2$ and the class $(z)$, only in the case $q\equiv 3 \Mod{4}$ }
\end{corollary}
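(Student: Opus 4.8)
The plan is to track, for each even exponent appearing in the given sequence, which canonical class label it reduces to, using two structural inputs. First, by Lemma~\ref{lemma:2} every power $a^l$ is conjugate to its inverse $a^{-l}=a^{q-1-l}$ (explicitly, conjugation by $\begin{pmatrix}0&1\\-1&0\end{pmatrix}\in\SL_2(q)$ sends $a$ to $a^{-1}$), and likewise $b^m\sim b^{-m}=b^{q+1-m}$; this is precisely the identification under which $1\le l\le(q-3)/2$ and $1\le m\le(q-1)/2$ index the distinct classes. Second, I will compute the two ``half-order'' powers. Since $\langle\nu\rangle=\mathbb{F}_q^x$ has order $q-1$, we have $\nu^{(q-1)/2}=-1$, so $a^{(q-1)/2}=\begin{pmatrix}\nu^{(q-1)/2}&0\\0&\nu^{-(q-1)/2}\end{pmatrix}=\begin{pmatrix}-1&0\\0&-1\end{pmatrix}=z$; and since $z=-\mathbf{1}$ is the unique involution of $\SL_2(q)$ (any solution of $g^2=\mathbf{1}$ is diagonalizable with eigenvalues $\pm1$, and $\det g=1$ forces $g=\mathbf{1}$ or $g=-\mathbf{1}$), the order-two element $b^{(q+1)/2}$ must also equal $z$. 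Hence $(a^{(q-1)/2})=(b^{(q+1)/2})=(z)$.

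With these in hand, consider the $a$-sequence, whose exponents are exactly the even integers $e$ with $2\le e\le q-3$. Because $q-1$ is even, the map $e\mapsto q-1-e$ carries even exponents to even exponents and pairs the equal classes $(a^e)=(a^{q-1-e})$; its unique fixed point is $e=(q-1)/2$, whose class is $(z)$. I will then split on $q\Mod 4$: the fixed exponent $(q-1)/2$ is even — hence genuinely present in the sequence — precisely when $q\equiv1\Mod4$, which is exactly the case in which $(z)$ occurs (and it lies in range since $(q-1)/2\le q-3$ for $q\ge5$). Every other even exponent sits in a true two-element orbit $\{e,q-1-e\}$, and I will check that the smaller member runs over exactly the even integers in $[2,(q-3)/2]$, so each such $(a^l)$ is hit by precisely two terms. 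A cardinality check — the sequence has $(q-3)/2$ entries, equal to twice the number of even $l$ in the range plus the single copy of $(z)$ when $q\equiv1\Mod4$ — confirms that nothing is over- or under-counted.

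The $b$-sequence is handled identically, now with the involution $e\mapsto q+1-e$ on the even exponents $2\le e\le q-1$ and fixed point $(q+1)/2$, whose class is $(z)$. Here $(q+1)/2$ is even, hence in the sequence, exactly when $q\equiv3\Mod4$, which is precisely when $(z)$ appears; the remaining two-element orbits have smaller members ranging over the even integers in $[2,(q-1)/2]$, giving two copies of each $(b^m)$, and the length $(q-1)/2$ of the sequence again closes the count.

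The only genuinely delicate point — everything else being bookkeeping — is the parity analysis of the fixed exponents: the entire $q\Mod4$ dichotomy turns on whether $(q-1)/2$, respectively $(q+1)/2$, is even, and on verifying that this fixed exponent actually lies in the stated range. Making the endpoints of the ``smaller member'' ranges coincide exactly with the even integers up to $(q-3)/2$ and $(q-1)/2$ is a parity verification that must be carried out separately in each congruence class, but it presents no substantive obstacle.
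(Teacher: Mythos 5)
Your proof is correct and takes essentially the same approach as the paper: both rest on Lemma~\ref{lemma:2} together with the orders of $a$ and $b$ to get the pairings $(a^e)=(a^{q-1-e})$ and $(b^e)=(b^{q+1-e})$, identify the half-order powers $a^{(q-1)/2}$ and $b^{(q+1)/2}$ with $z$ via the uniqueness of the involution in $\SL_2(q)$, and derive the $q\Mod{4}$ dichotomy from the parity of $(q-1)/2$ and $(q+1)/2$. Your only deviations are cosmetic: you get $a^{(q-1)/2}=z$ directly from $\nu^{(q-1)/2}=-1$, you prove uniqueness of the involution by diagonalizability (separability of $x^2-1$ for odd $q$) instead of the paper's explicit $2\times 2$ matrix squaring, and you spell out the orbit-counting bookkeeping that the paper compresses into one sentence.
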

\begin{proof}
Basing on Lemma \ref{lemma:2}, $(a^l)=((a^l)^{-1})$ and $(b^m)=((b^l)^{-1})$ for any $1\leq l\leq (q-3)/2$ and $1\leq m\leq(q-1)/2$. Since $a$ and $b$ are of orders $q-1$ and $q+1$ respectively, it follows that $(a^l)=(a^{q-1-l})$ and $(b^m)=(b^{q+1-m})$. Since $q-1$ and $q+1$ are even, the assertion concerning classes different than $(z)$ follows from the above correspondences. Now, we only have to show that, if $q\equiv 1\Mod{4}$, then $a^{(q-1)/2}=z$ and, if $q\equiv 3\Mod{4}$, $b^{(q+1)/2}=z$. Obviously, $a^{(q-1)/2}$ and $b^{(q+1)/2}$ are elements of order $2$ in the appropriate cases. We show that the only element of order $2$ in $\SL_2(q)$ is $z$. For this, assume that some $\begin{pmatrix}
a&b\\c&d
\end{pmatrix}$ is of order $2$. Then 
$$\begin{pmatrix}
1&0\\0&1
\end{pmatrix}=\begin{pmatrix}
a&b\\c&d
\end{pmatrix}^2=\begin{pmatrix}
a^2+bc&b(a+d)\\c(a+d)&d^2+bc
\end{pmatrix}$$
We have two possible cases. First possibility is that $d=-a$. But then $a^2+bc=1$. On the other hand, 
$$1=\det\begin{pmatrix}
a&b\\c&d
\end{pmatrix}=ad-bc=-a^2-bc,$$ which is a contradiction. Hence, we end up in the second case, namely $b=c=0$. But then $a^2=1$, so for $\begin{pmatrix}
a&b\\c&d
\end{pmatrix}$ to have order $2$, we must have $a=-1$ and $\begin{pmatrix}
a&b\\c&d
\end{pmatrix}=z$ 
\end{proof}
\begin{corollary}\label{corollary:3}
\emph{Let $\chi$ be an irreducible complex character of $\SL_2(q)$. Then, the Frobenius-Schur indicator of $\chi$ is given by
\begin{align*}
\iota(\chi)=\frac{1}{q^3-q}\cdot\Big(2\chi(1)&+(q^2+q)\chi(z)+(q^2-1)(\chi(c)+\chi(d))\\
&+2q\Big((q+1)\sum_{l=1}^{[(q-3)/4]}\chi(a^{2l})+(q-1)\sum_{m=1}^{[(q-1)/4]}\chi(b^{2m})\Big)\Big)
\end{align*}
for $q\equiv 1\Mod{4}$ and
\begin{align*}
\iota(\chi)=\frac{1}{q^3-q}\cdot\Big(2\chi(1)&+(q^2-q)\chi(z)+(q^2-1)(\chi(c)+\chi(d))\\
&+2q\Big((q+1)\sum_{l=1}^{[(q-3)/4]}\chi(a^{2l})+(q-1)\sum_{m=1}^{[(q-1)/4]}\chi(b^{2m})\Big)\Big)
\end{align*}
for $q\equiv 3\Mod{4}$.}
\end{corollary}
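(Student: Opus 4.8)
The plan is to start from the standard formula for the Frobenius--Schur indicator, $\iota(\chi)=\frac{1}{|G|}\sum_{g\in G}\chi(g^2)$, and to evaluate the right-hand side by grouping the sum over $G=\SL_2(q)$ into conjugacy classes. Since $g\mapsto g^2$ commutes with conjugation, the function $g\mapsto\chi(g^2)$ is constant on each conjugacy class, so I can rewrite $\sum_{g\in G}\chi(g^2)=\sum_{C}|C|\,\chi(g_C^2)$, where $C$ ranges over the conjugacy classes of Lemma 1 and $g_C$ is a representative. The prefactor $1/|G|=1/(q^3-q)$ already supplies the overall normalisation appearing in both displayed formulas, and a consistency check that the class sizes sum to $q^3-q$ can be run at the end.

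Next I would insert the conjugacy class sizes (standard for $\SL_2(q)$ and extractable from \cite[Theorem 38.1]{Dornhoff1971}): $|(\mathbf 1)|=|(z)|=1$, $|(c)|=|(d)|=|(zc)|=|(zd)|=(q^2-1)/2$, $|(a^l)|=q(q+1)$ and $|(b^m)|=q(q-1)$. The contributions of $(\mathbf 1)$ and $(z)$ are immediate: both square into $(\mathbf 1)$, giving the term $2\chi(1)$. For the four unipotent-type classes I would invoke Proposition \ref{proposition:2} together with the central identities $(zc)^2=c^2$ and $(zd)^2=d^2$. The key observation is that, regardless of whether $2$ is a quadratic residue modulo $q$, the pair $\{c^2,d^2\}$ lands in the pair of classes $\{(c),(d)\}$ in one order or the other, so that $\chi(c^2)+\chi(d^2)=\chi(c)+\chi(d)$ holds unconditionally. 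Summing the four classes, each of size $(q^2-1)/2$, then yields exactly $(q^2-1)(\chi(c)+\chi(d))$, and the quadratic-residue dichotomy of Proposition \ref{proposition:2} washes out.

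The heart of the argument is the contribution of the toral classes $(a^l)$ and $(b^m)$, where I would use Corollary \ref{corollary:2}. Squaring gives the sums $\sum_{l=1}^{(q-3)/2}\chi(a^{2l})$ and $\sum_{m=1}^{(q-1)/2}\chi(b^{2m})$, and Corollary \ref{corollary:2} states precisely that in each such list every genuine toral class occurs exactly twice, while the class $(z)$ occurs once---in the $a$-list when $q\equiv 1\Mod 4$ and in the $b$-list when $q\equiv 3\Mod 4$. Hence each sum collapses to $2\sum_{l=1}^{[(q-3)/4]}\chi(a^{2l})$ (respectively $2\sum_{m=1}^{[(q-1)/4]}\chi(b^{2m})$) plus, in the appropriate residue case, a single $\chi(z)$. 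Multiplying by the class sizes $q(q+1)$ and $q(q-1)$ and collecting terms produces the factor $2q\big((q+1)\sum_{l}\chi(a^{2l})+(q-1)\sum_{m}\chi(b^{2m})\big)$ common to both cases, while the stray $\chi(z)$ is attached to the coefficient $q(q+1)=q^2+q$ when $q\equiv 1\Mod 4$ and to $q(q-1)=q^2-q$ when $q\equiv 3\Mod 4$---which is exactly the only difference between the two displayed expressions.

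The main obstacle is the bookkeeping in this last step: one must correctly record the multiplicities of the toral classes under squaring and, above all, locate the involution $z$ in the right list according to $q\bmod 4$. This is precisely the content of Corollary \ref{corollary:2}, so once that corollary is invoked the remaining work is the routine algebraic collection of coefficients into the two cases.
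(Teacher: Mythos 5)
Your proposal is correct and follows essentially the same route as the paper: expand $\iota(\chi)=\frac{1}{|G|}\sum_{g}\chi(g^2)$ over conjugacy classes with the class sizes from \cite[Theorem 38.1]{Dornhoff1971}, use Proposition \ref{proposition:2} (with $(zc)^2=c^2$, $(zd)^2=d^2$) to see that the quadratic-residue dichotomy washes out and the unipotent classes contribute $(q^2-1)(\chi(c)+\chi(d))$, and use Corollary \ref{corollary:2} to collapse the toral sums with multiplicity two and to place the stray $\chi(z)$ with coefficient $q(q+1)$ or $q(q-1)$ according to $q\bmod 4$. The paper's proof organizes this as the two partial sums $s_1$ and $s_2$, but the decomposition, the invoked results, and the bookkeeping are identical to yours.
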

\begin{proof}
From Proposition \ref{proposition:2}, we know that $\mathbf{1}^2,z^2\in(\mathbf{1})$ and $c^2,(zc)^2\in(d)$, $d^2,(zd)^2\in(c)$ for $2$ not a quadratic residue modulo $q$ and $c^2,(zc)^2\in(c)$, $d^2,(zd)^2\in(d)$ for $2$ a quadratic residue modulo $q$. By \cite[Theorem 38.1]{Dornhoff1971}, we know that the conjugacy classes sizes of $\SL_2(q)$ are as follows, $|(\mathbf{1})|=|(z)|=1$, $|(c)|=|(d)|=|(zc)|=|(zd)|=(q^2-1)/2$, $|(a^l)|=q(q+1)$ and $|(b^m)|=q(q-1)$ for $1\leq l\leq(q-3)/2$ and $1\leq m\leq(q-1)/2$. Therefore, in both cases, the sum of the summands of the indicator coming from the classes $(\mathbf{1}),(z),(c),(d),(zc)$ and $(zd)$ is 
\begin{align*}
s_1=&\chi(\mathbf{1}^2)+\chi(z^2)+\frac{1}{2}(q^2-1)(\chi(c^2)+\chi(d^2)+\chi((zc)^2)+\chi((zd)^2))\\
=&2\chi(\mathbf{1})+(q^2-1)(\chi(c)+\chi(d))
\end{align*}
Now, by Corollary \ref{corollary:2}, the summands coming from $(a^l)$s and $(b^m)$s sum up to 
$$
s_2=2q\Big((q+1)\sum_{l=1}^{[(q-3)/4]}\chi(a^{2l})+(q-1)\sum_{m=1}^{[(q-1)/4]}\chi(b^{2m})\Big)+q(q+1)\chi(z)
$$
for $q\equiv 1\Mod{4}$ and to
$$
s_2=2q\Big((q+1)\sum_{l=1}^{[(q-3)/4]}\chi(a^{2l})+(q-1)\sum_{m=1}^{[(q-1)/4]}\chi(b^{2m})\Big)+q(q-1)\chi(z)
$$
for $q\equiv 3\Mod{4}$. Summing up $s_1$ and $s_2$, we get the desired formulas for $\iota(\chi)$. 
\end{proof}
We are ready now to figure out the real irreducible characters of $\SL_2(q)$,
\begin{lemma}\label{lemma:5}
\emph{Using the notations from \cite[Theorem 38.1]{Dornhoff1971}, the characters of real irreducible representations of $\SL_2(q)$ are given by $\mathbf{1}, \psi, \chi_{2i},2\chi_{2i'+1},\theta_{2j},2\theta_{2j'+1}$
for $1\leq 2i,2i'+1\leq (q-3)/2$, $1\leq 2j,2j'+1\leq(q-1)/2$ and by $\xi_1,\xi_2,2\eta_1,2\eta_2$ for $q\equiv 1\Mod{4}$ and by $2\operatorname{Re}\xi_1=\xi_1+\xi_2$ and $2\Rex\eta_1=\eta_1+\eta_2$ for $q\equiv 3\Mod{4}$.}
\end{lemma}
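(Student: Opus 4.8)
The plan is to combine the Frobenius--Schur theory with the indicator formulas of Corollary \ref{corollary:3}. Recall that for each complex irreducible character $\chi$ the indicator $\iota(\chi)$ equals $1$, $-1$ or $0$, and that these three values correspond exactly to the three ways a real irreducible representation arises: if $\iota(\chi)=1$ then $\chi$ is afforded by a real representation and is itself a real irreducible character; if $\iota(\chi)=-1$ then $\chi$ is real-valued but quaternionic and the associated real irreducible character is $2\chi$; and if $\iota(\chi)=0$ then $\chi$ is not real-valued and the conjugate pair $\{\chi,\overline{\chi}\}$ yields the single real irreducible character $\chi+\overline{\chi}=2\Rex\chi$. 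Thus it suffices to determine $\iota(\chi)$ for every row of the character table and then read off the corresponding real character.

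First I would dispose of the non-real-valued characters. By the analysis in the proof of Lemma \ref{lemma:2}, non-real values occur only when $q\equiv 3\Mod 4$ (i.e.\ $\epsilon=-1$), precisely for $\xi_1,\xi_2,\eta_1,\eta_2$; moreover the table shows $\overline{\xi_1}=\xi_2$ and $\overline{\eta_1}=\eta_2$, since complex conjugation sends $\sqrt{\epsilon q}=\sqrt{-q}$ to its negative and thereby interchanges the columns $c$ and $d$. Hence for $q\equiv 3\Mod 4$ these four characters all have $\iota=0$ and contribute exactly the two real irreducible characters $\xi_1+\xi_2=2\Rex\xi_1$ and $\eta_1+\eta_2=2\Rex\eta_1$.

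For the real-valued characters I would substitute their table values into the two formulas of Corollary \ref{corollary:3}. For $\mathbf 1$ and $\psi$ the computation is immediate and returns $\iota=1$. For $\chi_i$ the only delicate ingredient is the sum $T=\sum_{l=1}^{[(q-3)/4]}\chi_i(a^{2l})=\sum_l(\zeta_{q-1}^{2il}+\zeta_{q-1}^{-2il})$, while the $b$-part vanishes. Writing $\beta=\zeta_{q-1}^{2i}$, a nontrivial root of unity of order dividing $(q-1)/2$, the sum of $\beta^l$ over a full period is $0$, and pairing $\beta^l$ with $\beta^{-l}$ collapses $T$ to $-1-(-1)^i$ when $q\equiv 1\Mod 4$ and to $-1$ when $q\equiv 3\Mod 4$. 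Feeding this together with $\chi_i(z)=(-1)^i(q+1)$ into the indicator formula makes the numerator reduce to $(-1)^i(q^3-q)$, so that $\iota(\chi_i)=(-1)^i$; the same scheme applied to $\theta_j$ (now using $\theta_j(b^m)=-\nu_{q+1}^{jm}$ and the analogous period sum over the $b$-classes) gives $\iota(\theta_j)=(-1)^j$. Finally, for $q\equiv 1\Mod 4$ the characters $\xi_1,\xi_2,\eta_1,\eta_2$ are real-valued, and since $\xi_{1,2}(a^{2l})=1$ and $\eta_{1,2}(b^{2m})=(-1)^{2m+1}=-1$ the relevant sums are elementary and the formula yields $\iota(\xi_1)=\iota(\xi_2)=1$ and $\iota(\eta_1)=\iota(\eta_2)=-1$.

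Assembling these indicator values through the Frobenius--Schur correspondence then produces exactly the asserted list: $\mathbf 1,\psi,\chi_{2i},\theta_{2j}$ (and, for $q\equiv 1\Mod 4$, $\xi_1,\xi_2$) as genuine real characters; $2\chi_{2i'+1},2\theta_{2j'+1}$ (and $2\eta_1,2\eta_2$ when $q\equiv 1\Mod 4$) from the quaternionic ones; and $2\Rex\xi_1,2\Rex\eta_1$ from the conjugate pairs when $q\equiv 3\Mod 4$. The main obstacle is the bookkeeping of the third step: correctly evaluating the root-of-unity sums $\sum\chi(a^{2l})$ and $\sum\chi(b^{2m})$ and tracking where the factor $(-1)^i$ (respectively $(-1)^j$) enters---from the $\chi(z)$ term alone when $q\equiv 3\Mod 4$, and jointly from $\chi(z)$ and the sum $T$ when $q\equiv 1\Mod 4$---so that the numerator cleanly reduces to $\pm(q^3-q)$. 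As a consistency check one can verify that $\sum_\chi\iota(\chi)\chi(1)=2$, matching the number of solutions of $g^2=\mathbf 1$ in $\SL_2(q)$ exhibited in the proof of Corollary \ref{corollary:2}.
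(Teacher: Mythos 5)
Your proposal is correct, and it follows the paper's overall skeleton — Serre's trichotomy for the Frobenius--Schur indicator applied to the formulas of Corollary \ref{corollary:3}, with identical treatment of $\mathbf{1}$, $\psi$, and of $\xi_1,\xi_2,\eta_1,\eta_2$ in both congruence cases — but it handles the two infinite families $\chi_i,\theta_j$ by a genuinely different device. The paper never computes $\iota(\chi_i)$ or $\iota(\theta_j)$ exactly: it observes these characters are real-valued, so their indicators lie in $\{-1,1\}$, and then shows the \emph{differences} $\iota(\chi_{2i})-\iota(\chi_{2i'+1})$ and $\iota(\theta_{2j})-\iota(\theta_{2j'+1})$ are strictly positive via the crude bound $|\nu_r^s|\leq 2$ on the root-of-unity sums; positivity together with values in $\{-1,1\}$ forces $1$ and $-1$ respectively. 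You instead evaluate the geometric sums exactly — your values $T=-1-(-1)^i$ for $q\equiv 1\Mod{4}$ (where one must, as you implicitly do, replace the formal value $\nu_{q-1}^{i(q-1)/2}=2(-1)^i$ at the $z$-slot by the separately counted $\chi_i(z)$) and $T=-1$ for $q\equiv 3\Mod{4}$ both check out — and obtain the closed forms $\iota(\chi_i)=(-1)^i$, $\iota(\theta_j)=(-1)^j$. Your route costs more bookkeeping but buys more: exact indicators rather than an inequality, no reliance on comparing even- and odd-index characters in pairs, and it supports your pleasant global check $\sum_\chi\iota(\chi)\chi(1)=2$ against the two square roots of $\mathbf{1}$, which the paper's argument cannot perform. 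One small slip in your commentary, harmless to the result: for $\theta_j$ the two cases are swapped relative to $\chi_i$ — the period sum contributes the factor $(-1)^j$ when $q\equiv 3\Mod{4}$ (where $b^{(q+1)/2}=z$ sits among the even powers), while for $q\equiv 1\Mod{4}$ the factor comes from $\theta_j(z)$ alone; your final values are nonetheless correct in all four cases.
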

\begin{proof}
By \cite[pp. 108]{Serre1977}, we know that if $\chi$ is a complex irreducible character , then there are three possibilities:
\begin{enumerate}
    \item $\iota(\chi)=1$ if $\chi$ is a character of some real irreducible representation
    \item $\iota(\chi)=0$ if $2\Rex\chi=\chi+\overline{\chi}$ is a character of some real irreducible representation (in this case $\chi$ takes some non-real values)
    \item $\iota(\chi)=-1$ if $2\chi$ is a character of some real irreducible representation (int this case $\chi$ takes all real values).
\end{enumerate}
Moreover, every character of some real irreducible representation is obtained in that way. Hence, we only have to show that $\iota(\mathbf{1})=\iota(\psi)=\iota(\chi_{2i})=\iota(\theta_{2j})=1$, $\iota(\chi_{2i+1})=\iota(\theta_{2j+1})=-1$ and $\iota(\xi_1)=\iota(\xi_2)=1$, $\iota(\eta_1)=\iota(\eta_2)=-1$ in case $q\equiv 1 \Mod{4}$ and $\iota(\xi_1)=\iota(\xi_2)=\iota(\eta_1)=\iota(\eta_2)=0$ in case $q\equiv 3 \Mod{4}$. The relation $\iota(\mathbf{1})=1$ is obvious, since $\mathbf{1}$ is the trivial character. In what follows, we use the data from the character table of $\SL_2(q)$.

Thus, by Corollary \ref{corollary:2}, we have
$$
\iota(\psi)=\frac{1}{q^3-q}\cdot\Big(2q+(q^2+q)q+2q\Big((q+1)\cdot\frac{q-5}{4}-(q-1)\cdot\frac{q-1}{4}\Big)\Big)=1
$$
when $q\equiv 1\Mod{4}$ and 
$$
\iota(\psi)=\frac{1}{q^3-q}\cdot\Big(2q+(q^2-q)q+2q\Big((q+1)\cdot\frac{q-3}{4}-(q-1)\cdot\frac{q-3}{4}\Big)\Big)=1
$$
for $q\equiv 3 \Mod{4}$. Notice that from the complex character table of $\SL_2(q)$ follows that $\iota(\xi_1)=\iota(\xi_2)$ and $\iota(\eta_1)=\iota(\eta_2)$ since $\xi_1$ is conjugate (as a complex vector) to $\xi_2$ and $\eta_1$ is conjugate to $\eta_2$. Therefore, if $q\equiv 1 \Mod{4}$,
\begin{align*}
\iota(\eta_1)=\iota(\eta_2)=\frac{1}{q^3-q}\cdot\Big(q-1-&\frac{1}{2}(q^2+q)(q-1)-(q^2-1)\\
+&2q\cdot\Big(-(q-1)\cdot\frac{q-1}{4}\Big)\Big)=-1
\end{align*}
\begin{align*}
\iota(\xi_1)=\iota(\xi_2)=\frac{1}{q^3-q}\cdot\Big(q+1+&\frac{1}{2}(q^2+q)(q+1)+(q^2-1)\\
+&2q\cdot\Big((q+1)\cdot\frac{q-5}{4}\Big)\Big)=1
\end{align*}
and, if $q\equiv 3 \Mod{4}$,
\begin{align*}
\iota(\eta_1)=\iota(\eta_2)=\frac{1}{q^3-q}\cdot\Big(q-1-&\frac{1}{2}(q^2-q)(q-1)-(q^2-1)\\
+&2q\cdot\Big(-(q-1)\cdot\frac{q-3}{4}\Big)\Big)=0
\end{align*}
\begin{align*}
\iota(\xi_1)=\iota(\xi_2)=\frac{1}{q^3-q}\cdot\Big(q+1-&\frac{1}{2}(q^2-q)(q+1)+(q^2-1)\\
+&2q\cdot\Big((q+1)\cdot\frac{q-3}{4}\Big)\Big)=0
\end{align*}
Now, notice that $\chi_i$s and $\theta_j$s are real-valued, since $\nu_r^s=\zeta_r^s+\zeta_r^{-s}$ is real. Therefore, the indicators of these characters are either $1$ or $-1$. We show that $\iota(\chi_{2i})-\iota(\chi_{2i'+1})$ and $\iota(\theta_{2j})-\iota(\chi_{2j'+1})$ are positive what would imply $\iota(\chi_{2i})=\iota(\theta_{2j})=1$ and $\iota(\chi_{2i+1})=\iota(\theta_{2j+1})=-1$ (since $\iota(\chi_i),\iota(\theta_j)\in\{-1,1\}$). Assume $q\equiv 1 \Mod{4}$. Then
\begin{align*}
(q^3-q)(\iota(\chi_{2i})-\iota(\chi_{2i'+1}))=&2(q^2+q)(q+1)+2q(q+1)\sum_{l=1}^{[(q-3)/4]}(\nu_{q-1}^{4il}-\nu_{q-1}^{(4i'+2)l})\\
=&2q(q+1)\Big(q+1+\sum_{l=1}^{[(q-3)/4]}(\nu_{q-1}^{4il}-\nu_{q-1}^{(4i'+2)l})\Big)
\end{align*}
Since $|\nu_r^{s}|\leq 2$ and that the equality holds iff $\frac{r}{2}|s$ for even $r$ and $r|s$ for odd $r$, it follows that
$$
(q^3-q)(\iota(\chi_{2i})-\iota(\chi_{2i'+1}))\geq 2q(q+1)(q+1-4\cdot\frac{q-3}{4})=2q(q+1)\cdot 4>0
$$
Similarly,
\begin{align*}
(q^3-q)(\iota(\theta_{2j})-\iota(\theta_{2j'+1}))=&2(q^2+q)(q-1)\\
+&2q(q-1)\sum_{m=1}^{[(q-1)/4]}(-\nu_{q+1}^{4jm}-\nu_{q+1}^{(4j'+2)m})\\
=&2q(q-1)\Big(q+1+\sum_{m=1}^{[(q-3)/4]}(\nu_{q+1}^{4jm}-\nu_{q+1}^{(4j'+2)m})\Big)\\
\geq& 2q(q-1)\Big(q+1-4\cdot\frac{q-1}{4}\Big)=2q(q-1)\cdot 2>0
\end{align*}
In the case $q\equiv 3 \Mod{4}$, we have
\begin{align*}
(q^3-q)(\iota(\chi_{2i})-\iota(\chi_{2i'+1}))=&2q(q+1)\Big(q-1+\sum_{l=1}^{[(q-3)/4]}(\nu_{q-1}^{4il}-\nu_{q-1}^{(4i'+2)l})\Big)\\
> &2q(q+1)\Big(q-1-4\cdot\frac{q-3}{4}\Big)=2q(q+1)\cdot 2>0
\end{align*}
and
\begin{align*}
(q^3-q)(\iota(\theta_{2j})-\iota(\theta_{2j'+1}))=&2q(q-1)\Big(q-1+\sum_{m=1}^{[(q-1)/4]}(-\nu_{q+1}^{4jm}-\nu_{q+1}^{(4j'+2)m})\Big)\\
>& 2q(q+1)\Big(q-1-4\cdot\frac{q-1}{4}\Big)=0
\end{align*}
\end{proof}
\section{Fixed point dimensions for cyclic subgroups}
Since subgroups' fixed point subspaces' dimensions depend only on the conjugacy classes of subgroups, it suffices to find the dimensions of cyclic subgroups generated by conjugacy classes representatives. Obviously, $\dim V^{\langle \mathbf{1}\rangle}=\dim V$ for any irreducible $\mathbb{R}G$-module $V$. The folllowing tables are the real irreducible character tables derived from Lemma \ref{lemma:5} and the complex character table of $\SL_2(q)$,

\begin{center}
\begin{tabular}{l|l*{4}{c}r}
            &  $\mathbf{1}$& $z$ & $c$ & $d$ & $a^l$  & $b^m$ \\
\hline
$\mathds{1}$ & 1 & 1 & 1 & 1 & 1 & 1  \\
$\psi$   & $q$ & $q$ & $0$ & $0$ & $1$ & $-1 $ \\
$\chi_{2i}$   & $q+1$ & $q+1$ & $1$ & $1$ & $\nu_{q-1}^{2il}$ &$ 0$  \\
$2\chi_{2i+1}$   & $2q+2$ & $-2q-2$ & $2$ & $2$ & $2\nu_{q-1}^{(2i+1)l}$ &$ 0$  \\
$\theta_{2j}$    & $q-1$ & $(-1)^j(q-1)$ & $-1$ & $-1$ &$ 0$ & $-\nu_{q+1}^{2jm}$  \\
$2\theta_{2j+1}$    & $2q-2$ & $-2q+2$ & $-2$ & $-2$ &$ 0$ & $-2\nu_{q+1}^{(2j+1)m}$ 
\end{tabular}
\end{center}
and, if $q\equiv 1\Mod{4}$, then
\begin{center}
\begin{tabular}{l|l*{4}{c}r}
            &  $\mathbf{1}$& $z$ & $c$ & $d$ & $a^l$  & $b^m$ \\
\hline
$\xi_1$     & $\frac{1}{2}(q+1)$ & $\frac{1}{2}\epsilon(q+1)$ & $\frac{1}{2}(1+\sqrt{\epsilon q})$ & $\frac{1}{2}(1-\sqrt{\epsilon q})$ & $(-1)^l$ & 0 \\
$\xi_2$     & $\frac{1}{2}(q+1)$ & $\frac{1}{2}\epsilon(q+1)$ & $ \frac{1}{2}(1-\sqrt{\epsilon q}) $&$ \frac{1}{2}(1+\sqrt{\epsilon q}) $& $(-1)^l $& $0$ \\
$2\eta_1$    &$ q-1 $&$ 1-q $&$ -1+\sqrt{\epsilon q} $&$ -1-\sqrt{\epsilon q} $& $0$ & $2\cdot(-1)^{m+1}$ \\
$\eta_2$     &$ q-1 $&$ 1-q $&$ -1-\sqrt{\epsilon q} $&$ -1+\sqrt{\epsilon q} $& $0$ & $2\cdot(-1)^{m+1}$ 
\end{tabular}
\end{center}
while, when $q\equiv 3\Mod{4}$, then
\begin{center}
\begin{tabular}{l|l*{4}{c}r}
            &  $\mathbf{1}$& $z$ & $c$ & $d$ & $a^l$  & $b^m$ \\
\hline
$2\Rex\xi_1$     & $q+1$ & $-q-1$ & $1$ & $1$ & $2\cdot(-1)^l$ & 0 \\
$2\Rex\xi_2$     & $q-1$ & $q-1$ & $ -1 $&$-1 $& $0$& $2\cdot(-1)^{m+1}$
\end{tabular}
\end{center}
where $1\leq i,l\leq (q-3)/2$, $1\leq j,m\leq(q-1)/2$, $\epsilon=(-1)^{(q-1)/2}$, $\zeta_r=\exp{2\pi i/r}$, $\nu_r^s=\zeta_r^s+\zeta_r^{-s}$ and $\chi(zc)=\frac{\chi(z)}{\chi(1)}\cdot\chi(c)$ and $\chi(zd)=\frac{\chi(z)}{\chi(1)}\cdot\chi(d)$ for any real irreducible character $\chi$.
\begin{lemma}
\emph{$\dim\psi^{\langle z\rangle}=q$, $\dim\chi_{2i}^{\langle z\rangle}=q+1$, $\dim\theta_{2j}^{\langle z\rangle}=q-1$, 
$\dim(2\chi_{2i+1})^{\langle z\rangle}=\dim(2\theta_{2j+1})^{\langle z\rangle}=0$ and, if $q\equiv 1 \Mod{4}$,
$$
\dim\xi_i^{\langle z\rangle}=(q+1)/2, \dim(2\eta_i)^{\langle z\rangle}=0
$$
and, if $q\equiv 3 \Mod{4}$,
$$
\dim(2\Rex\xi_1)^{\langle z\rangle}=0, \dim(2\Rex\eta_1)^{\langle z\rangle}=q-1
$$
Moreover, for $H=\langle c\rangle$ or $\langle d\rangle$, we have
$$
\dim\psi^H=1, \dim\chi_{2i}^H=2,\dim(2\chi_{2i+1})^H=4, \dim\theta_{2j}^H=\dim(2\theta_{2j+1})^H=0
$$
and, if $q\equiv 1 \Mod{4}$, $\dim\xi_i^H=1$, $\dim(2\eta_i)^H=0$ for $i=1,2$. Whereas, if $q\equiv 3 \Mod{4}$, $\dim(2\Rex\xi_1)^H=2$ and $\dim(2\Rex\eta_1)^H=0$.}
\end{lemma}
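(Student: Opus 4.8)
The plan is to apply the standard averaging formula for fixed points: if $V$ is a real representation of $\SL_2(q)$ with character $\chi$ and $H\leq\SL_2(q)$ is a subgroup, then $\frac{1}{|H|}\sum_{h\in H}\rho(h)$ is the projection onto $V^H$, so that its trace yields
$$
\dim V^H=\frac{1}{|H|}\sum_{h\in H}\chi(h).
$$
Every dimension asserted in the lemma will then be produced by evaluating this sum and reading the values off the two real character tables displayed above. Thus the whole argument reduces to determining, for each of $H=\langle z\rangle$, $\langle c\rangle$, $\langle d\rangle$, how the elements of $H$ distribute among the conjugacy classes of $\SL_2(q)$.

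First I would dispose of $H=\langle z\rangle$. Since $|z|=2$ by Lemma 1, we have $\langle z\rangle=\{\mathbf{1},z\}$ and the formula collapses to $\dim V^{\langle z\rangle}=\tfrac12(\chi(\mathbf{1})+\chi(z))$. Each claimed value is then immediate: for the even-indexed characters $\chi(z)=\chi(\mathbf{1})$ returns the full dimension, while for the doubled odd-indexed characters $2\chi_{2i+1},2\theta_{2j+1},2\eta_i,2\Rex\xi_1$ one has $\chi(z)=-\chi(\mathbf{1})$, forcing the fixed space to vanish; the entries for $\xi_i$ (where $\epsilon=1$) and for $2\Rex\eta_1$ are equally direct substitutions.

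The substantive case is $H=\langle c\rangle$ (and identically $\langle d\rangle$), where $|H|=q$. Here I must identify the conjugacy class of each nontrivial power $c^k=\bigl(\begin{smallmatrix}1&0\\k&1\end{smallmatrix}\bigr)$ for $1\leq k\leq q-1$. Repeating the conjugation computation of Proposition 2 with an arbitrary $k$ in place of $2$ — solving $c^k g=gc$ for $g=\bigl(\begin{smallmatrix}a&b\\c&d\end{smallmatrix}\bigr)$ forces $b=0$, $d=ka$, hence $\det g=ka^2$ — shows that $c^k$ is conjugate to $c$ exactly when $k^{-1}$, equivalently (by Proposition 1) $k$, is a quadratic residue modulo $q$, and to $d$ otherwise; since $(c)$ and $(d)$ are the only classes of order-$q$ elements, these cases are exhaustive. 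As precisely $(q-1)/2$ of $1,\dots,q-1$ are quadratic residues, the power sum becomes
$$
\sum_{h\in\langle c\rangle}\chi(h)=\chi(\mathbf{1})+\frac{q-1}{2}\bigl(\chi(c)+\chi(d)\bigr),
$$
and the same count governs $\langle d\rangle$ because $\nu$ is a non-residue, which merely interchanges which powers fall into $(c)$ and which into $(d)$.

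It then remains to substitute the table values into $\dim V^H=\frac{1}{q}\bigl(\chi(\mathbf{1})+\frac{q-1}{2}(\chi(c)+\chi(d))\bigr)$. For the rational-valued characters this is routine arithmetic; for $\xi_i$ and $\eta_i$ the only subtlety is that the irrational entries $\pm\sqrt{\epsilon q}$ occurring in $\chi(c)$ and $\chi(d)$ are equal and opposite, so $\chi(c)+\chi(d)$ is rational and the square roots cancel, delivering the stated integers. I expect the main obstacle to be exactly this conjugacy bookkeeping of the unipotent powers: once the quadratic-residue dichotomy and the resulting $(q-1)/2$ count are established, every dimension in the lemma drops out by direct substitution.
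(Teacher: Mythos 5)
Your proposal is correct and follows essentially the same route as the paper: the averaging formula over $\langle z\rangle$ with direct table substitution, and for $\langle c\rangle$ the conjugation computation showing $c^k\in(c)$ iff $k$ is a quadratic residue, so that each of $\chi(c)$ and $\chi(d)$ is counted $(q-1)/2$ times and the $\pm\sqrt{\epsilon q}$ terms cancel. The only cosmetic difference is that the paper transfers the result to $\langle d\rangle$ by observing that $\langle c\rangle$ and $\langle d\rangle$ are conjugate Sylow $q$-subgroups, whereas you redo the residue count using that $\nu$ is a non-residue; both are valid.
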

\begin{proof}
We have $\dim\psi^{\langle z\rangle}=\frac{1}{2}(q+q)=q$, $\dim\chi_{2i}^{\langle z\rangle}=\frac{1}{2}((q+1)+(q+1))= q+1$, $\dim(2\chi_{2i+1})^{\langle z\rangle}=\frac{1}{2}((2q+2)-2q-2)=0$, $\dim\theta_{2j}^{\langle z\rangle}=\frac{1}{2}((q-1)+(q-1))=q-1$, $\dim(2\theta_{2j+1})^{\langle z\rangle}=\frac{1}{2}((2q-2)-2q+2)=0$ and, if $q\equiv 1 \Mod{4}$, then $\dim\xi_i^{\langle z\rangle}=\frac{1}{2}(\frac{q+1}{2}+\frac{q+1}{2})=\frac{q+1}{2}$, $\dim(2\eta_i)^{\langle z\rangle}=\frac{1}{2}((q-1)+(1-q))=0$ for $i=1,2$, while, if $q\equiv 3 \Mod{4}$, $\dim(2\Rex\xi_1)^{\langle z\rangle}=\frac{1}{2}((q+1)-q-1)=0$, $\dim(2\Rex\eta_1)^{\langle z\rangle}=\frac{1}{2}((q-1)+(q-1))=q-1$. Let us focus now on $\langle c\rangle$ and $\langle d\rangle$. Since $\langle c\rangle$ and $\langle d\rangle$ are Sylow subgroups of $\SL_2(q)$ of order $q$, it follows from the Sylow Theorem that they are conjugate and $\dim V^{\langle d\rangle}=\dim V^{\langle c\rangle}$ for any $\mathbb{R}G$-module $V$. Now, notice that 
\begin{eqnarray}\label{eqarray:12}
\dim V^{\langle c\rangle}=\frac{1}{q}\cdot\Big(\chi_{\raisebox{-.5ex}{$\scriptstyle V$}}(1)+\sum_{k=1}^{q-1}\chi_{\raisebox{-.5ex}{$\scriptstyle V$}}(c^k)\Big)
\end{eqnarray}
Since $(c^k)\in\{(c),(d)\}$, we conclude from the shape of real character table that $\dim\psi^{\langle c\rangle}=\frac{1}{q}(q+(q-1)\cdot 0)=1$, $\dim\chi_{2i}^{\langle c\rangle}=\frac{1}{q}((q+1)+(q-1)\cdot 1)=2$, $\dim(2\chi_{2i+1})^{\langle c\rangle}=\frac{1}{q}((2q+2)+(q-1)\cdot 2)=4$, $\dim\theta_{2j}^{\langle c\rangle}=\frac{1}{q}((q-1)+(q-1)\cdot(-1))=0$, $\dim(2\theta_{2j+1})^{\langle c\rangle}=\frac{1}{q}((2q-2)+(q-1)\cdot(-2))=0$ and, if $q\equiv 3 \Mod{4}$, then $\dim(2\Rex\xi_1)^{\langle c\rangle}=\frac{1}{q}((q+1)+(q-1)\cdot 1)=2$, $\dim(2\Rex\eta_1)^{\langle c\rangle}=\frac{1}{q}((q-1)+(q-1)\cdot(-1))=0$.

For the case $q\equiv 1 \Mod{4}$, we have to show a little bit more since it is not the case that $\xi_i$ and $\eta_i$ are equal on $c$ and $d$. We know that $c^k=\begin{pmatrix}
1&0\\k&1
\end{pmatrix}$. We establish when $(c^k)=(c)$ and when $(c^k)=(d)$. We have $(c^k)=(c)$ if and only if we can find a matrix $\begin{pmatrix}
a&b\\c&d
\end{pmatrix}\in\SL_2(q)$ such that
$$
\begin{pmatrix}
1&0\\k&1
\end{pmatrix}\begin{pmatrix}
a&b\\c&d
\end{pmatrix}=\begin{pmatrix}
a&b\\c&d
\end{pmatrix}\begin{pmatrix}
1&0\\1&1
\end{pmatrix}\Leftrightarrow\begin{pmatrix}
a&b\\ka+c&kb+d
\end{pmatrix}=\begin{pmatrix}
a+b&b\\c+d&d
\end{pmatrix}
$$
that is, when $b=0$ and $d=ka$. Thus, $$1=\det\begin{pmatrix}
a&b\\c&d
\end{pmatrix}=\det\begin{pmatrix}
a&0\\c&ka
\end{pmatrix}=ka^2,$$
\end{proof} which holds if and only if $k$ is a quadratic residue modulo $q$. However, by \cite[Exercise 9.3.3]{Stillwell2003}, we know that in the set $\{1,\ldots,q-1\}$ we have precisely $(q-1)/2$ quadratic residues modulo $q$ and $(q-1)/2$ non-residues. Therefore we may count each of $\chi_{\raisebox{-.5ex}{$\scriptstyle V$}}(c)$ and $\chi_{\raisebox{-.5ex}{$\scriptstyle V$}}(d)$ exactly $(q-1)/2$ times when computing (\ref{eqarray:12}). Hence, we have $\dim\xi_1^{\langle c\rangle}=\frac{1}{q}(\frac{q+1}{2}+\frac{q-1}{2}(\frac{1+\sqrt{q}}{2}+\frac{1-\sqrt{q}}{2}))=1$, $\dim\xi_2^{\langle c\rangle}=\frac{1}{q}(\frac{q+1}{2}+\frac{q-1}{2}(\frac{1-\sqrt{q}}{2}+\frac{1+\sqrt{q}}{2}))=1$, $\dim(2\eta_1)^{\langle c\rangle}=\frac{1}{q}((q-1)+\frac{q-1}{2}(-1+\sqrt{q}-1-\sqrt{q}))=0$ and $\dim(2\eta_2)^{\langle c\rangle}=\frac{1}{q}((q-1)+\frac{q-1}{2}(-1-\sqrt{q}-1+\sqrt{q}))=0$
\begin{proposition}
\emph{If $H\leq \SL_2(q)$ and $|H|=2q$, then $H=\langle zc\rangle$ or $H=\langle zd\rangle$.}
\end{proposition}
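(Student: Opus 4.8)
The plan is to pin down both the isomorphism type and the actual elements of $H$ directly from the arithmetic of $|H|=2q$. First I would extract the two subgroups that the order forces on $H$. Since $2\mid|H|$, Cauchy's theorem produces an element of order $2$; by the argument already used inside the proof of Corollary~\ref{corollary:2} (namely that $z$ is the only element of order $2$ in $\SL_2(q)$), this element must be $z$, so $\langle z\rangle\le H$. Since $q\mid|H|$, Sylow's theorem produces a subgroup $P\le H$ of order $q$, and $P$ is a full Sylow $q$-subgroup of $\SL_2(q)$ because $q^2-1$ is coprime to $q$.

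Next I would assemble these pieces into the structure of $H$. In a group of order $2q$ with $q$ an odd prime the number of Sylow $q$-subgroups divides $2$ and is congruent to $1\Mod{q}$, which forces it to be $1$; hence $P\trianglelefteq H$. As $z$ is central in $\SL_2(q)$ and $\langle z\rangle\cap P=\{\mathbf{1}\}$ by coprimality of orders, we get $H=\langle z\rangle P\cong\langle z\rangle\times P$, which is cyclic of order $2q$. Writing $g$ for a generator of $P$, the commuting elements $z$ and $g$ of coprime orders $2$ and $q$ give $H=\langle zg\rangle$. Moreover any element of order $q$ in $\SL_2(q)$ is unipotent and different from $\mathbf{1}$ (its minimal polynomial divides $x^{q}-1=(x-1)^{q}$ over $\mathbb{F}_q$), hence lies in $(c)\cup(d)$, so $P$ is generated by a unipotent element of the lower-triangular shape recorded in Lemma~1.

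The decisive step, and the one I expect to be the main obstacle, is the identification of $P$ with the concrete subgroup $\langle c\rangle$. Here I would first record that $\langle c\rangle=\langle d\rangle$: the powers $c^{k}=\bigl(\begin{smallmatrix}1&0\\k&1\end{smallmatrix}\bigr)$ already exhaust every lower unipotent matrix as $k$ runs over $\mathbb{F}_q$, and so do the powers $d^{k}=\bigl(\begin{smallmatrix}1&0\\k\nu&1\end{smallmatrix}\bigr)$ since $\nu\neq 0$; thus both symbols name the single unipotent subgroup $U=\{\bigl(\begin{smallmatrix}1&0\\k&1\end{smallmatrix}\bigr):k\in\mathbb{F}_q\}$ of order $q$. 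Consequently $\langle zc\rangle=\langle z\rangle\times U=\langle zd\rangle$, so it suffices to show that the Sylow $q$-subgroup $P$ sitting inside $H$ equals $U$; once $P=U$ is in hand, $H=\langle z\rangle\times U=\langle zc\rangle=\langle zd\rangle$ and the proposition is proved. Forcing the equality $P=U$ is exactly the delicate point, and it is where the explicit matrix description of the order-$q$ elements, combined with the centrality of $z$ and the fact that $zg$ generates $H$, must be brought to bear so that the generator $zg$ is matched with a generator of $\langle zc\rangle$.
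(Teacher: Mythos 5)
Your structural analysis --- $z$ is the unique involution, the Sylow $q$-subgroup $P$ of $H$ is normal in $H$ by the congruence conditions, $z$ is central, hence $H=\langle z\rangle\times P=\langle zg\rangle$ is cyclic of order $2q$, and $\langle c\rangle=\langle d\rangle=U$ so that $\langle zc\rangle=\langle zd\rangle=\langle z\rangle\times U$ --- is all correct, and in fact more careful than the paper's own proof. But your proof is genuinely incomplete at exactly the point you flag: you never establish $P=U$, and no argument can, because it is false. The number of Sylow $q$-subgroups of $\SL_2(q)$ is $q+1$ (it is $\equiv 1\Mod{q}$ and divides $q^2-1$, and it exceeds $1$: the upper unitriangular group $U^{+}=\{\bigl(\begin{smallmatrix}1&k\\0&1\end{smallmatrix}\bigr):k\in\mathbb{F}_q\}$ is a subgroup of order $q$ distinct from $U$). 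Consequently $\langle z\rangle\times U^{+}$ is a subgroup of order $2q$ different from $\langle zc\rangle=\langle zd\rangle$, so the proposition as literally stated is false, and your ``decisive step'' is not delicate but impassable. Your intermediate claim that $P$ ``is generated by a unipotent element of the lower-triangular shape recorded in Lemma~1'' already contains the slip: membership in the conjugacy class $(c)\cup(d)$ does not force lower-triangular form.

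What your argument does prove, and what the statement should say, is that $H$ is \emph{conjugate} to $\langle zc\rangle$: by Sylow's theorem all subgroups of order $q$ are conjugate, so $gPg^{-1}=U$ for some $g\in\SL_2(q)$, and centrality of $z$ gives $gHg^{-1}=\langle z\rangle\times U=\langle zc\rangle$. This weaker conclusion is all the paper needs, since, as noted at the start of Section~2, fixed point dimensions depend only on the conjugacy class of the subgroup. For comparison, the paper's proof reaches the literal statement by asserting without justification that $\langle c\rangle$ and $\langle d\rangle$ are \emph{the only} subgroups of order $q$ in $\SL_2(q)$ --- the same error in different clothing (and it also treats $\langle zc\rangle$ and $\langle zd\rangle$ as two subgroups when they coincide, which you correctly observed). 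So your route via Cauchy/Sylow structure theory plus $\langle c\rangle=\langle d\rangle$ is the right one; it just must terminate in ``conjugate to'' rather than ``equal to''.
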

\begin{proof}
Assume $H\leq \SL_2(q)$ is of order $2q$. It follows by the Sylow Theorem that cyclic subgroups of orders $q$ and $2$ are subgroups of $H$. On the other hand, the only subgroups of order $q$ of $\SL_2(q)$ are $\langle c\rangle$ and $\langle d\rangle$ and the only subgroup of order $2$ is $\langle z\rangle$. Thus, $z\in H$ and $c\in H$ or $d\in H$. If $c\in H$, then $zc\in H$, But $|zc|=2q$, so $|H|=\langle zc\rangle$. The case $d\in H$ is analogous - we get then $H=\langle zd\rangle$.
\end{proof}
\begin{lemma}
\emph{Let $H=\langle zc\rangle$ or $\langle zd\rangle$. Then
$$
\dim\psi^H=1, \dim\chi_{2i}^H=2, \dim(2\chi_{2i+1})^H=\dim\theta_{2j}^H=\dim(2\theta_{2j+1})^H=0
$$
and, if $q\equiv 1\Mod{4}$,
$$
\dim\xi_1^H=\dim\xi_2^H=1, \dim(2\eta_1)^H=\dim(2\eta_2)^H=0
$$
while, if $q\equiv 3 \Mod{4}$,
$$
\dim(2\Rex\xi_1)^H=\dim(2\Rex\eta_1)^H=0
$$}
\end{lemma}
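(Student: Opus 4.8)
The plan is to exploit the product structure of $H$ together with the centrality of $z$, which collapses every case either to the answer $0$ or to a fixed-point dimension already computed for $\langle c\rangle$ and $\langle d\rangle$ in the previous lemma. I treat $H=\langle zc\rangle$, the case $H=\langle zd\rangle$ being identical. First I would observe that $(zc)^q=z$ and $(zc)^2=c^2$; since $c$ has odd order $q$ we have $\langle c^2\rangle=\langle c\rangle$, so the Sylow $q$-subgroup of $H$ is $\langle c\rangle$, and as $\gcd(2,q)=1$ this yields the internal direct product $H=\langle z\rangle\times\langle c\rangle$. In particular every element of $H$ has the form $z^{\varepsilon}c^{k}$ with $\varepsilon\in\{0,1\}$ and $0\le k\le q-1$. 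For $H=\langle zd\rangle$ one uses $(zd)^2=d^2$ and $(zd)^q=z$ to get $H=\langle z\rangle\times\langle d\rangle$ in the same way.

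Next I would use the following elementary fact: for an involution $g$ (i.e. $g^2=\mathbf 1$) acting on a real module $V$, the relation $\chi_V(g)=\chi_V(\mathbf 1)$ forces $g$ to act as the identity, whereas $\chi_V(g)=-\chi_V(\mathbf 1)$ forces $g$ to act as $-\mathrm{id}$, since in either case the eigenvalues, all equal to $\pm1$, are pinned down by the trace. Reading the real character tables above one sees $\chi_V(z)=\pm\chi_V(\mathbf 1)$ for every real irreducible $V$, as it must be since $z$ is central of order $2$. If $\chi_V(z)=-\chi_V(\mathbf 1)$, then $z$ acts as $-1$ and $V^{\langle z\rangle}=0$; since $z\in H$ this forces $\dim V^{H}=0$. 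If instead $\chi_V(z)=\chi_V(\mathbf 1)$, then $z$ acts trivially, so $z^{\varepsilon}c^{k}$ acts on $V$ as $c^{k}$ and hence $V^{H}=V^{\langle c\rangle}$; the value $\dim V^{\langle c\rangle}$ is then quoted from the previous lemma, and $\dim V^{\langle d\rangle}=\dim V^{\langle c\rangle}$, since $\langle c\rangle$ and $\langle d\rangle$ are conjugate Sylow subgroups, disposes of $H=\langle zd\rangle$ at once.

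The remaining work is purely a matter of sorting the rows of the real character tables by the sign of $\chi_V(z)/\chi_V(\mathbf 1)$. For $2\chi_{2i+1}$ and $2\theta_{2j+1}$, and, when $q\equiv 1\Mod 4$, for $2\eta_1,2\eta_2$, and, when $q\equiv 3\Mod 4$, for $2\Rex\xi_1$, the table gives $\chi_V(z)=-\chi_V(\mathbf 1)$, so all of these have $H$-fixed dimension $0$. For $\psi$ and $\chi_{2i}$, and, when $q\equiv 1\Mod 4$, for $\xi_1,\xi_2$, we have $\chi_V(z)=\chi_V(\mathbf 1)$, so their $H$-fixed dimensions equal the $\langle c\rangle$-values $1,2,1,1$ recorded previously. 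Finally $\theta_{2j}$ and, for $q\equiv 3\Mod 4$, $2\Rex\eta_1$ yield $0$ regardless of the sign of $\chi_V(z)$, because their $\langle c\rangle$-fixed dimension already vanishes; this matches all the claimed values. The only genuine subtlety, and the step I would be most careful about, is the identification $H=\langle z\rangle\times\langle c\rangle$ and the resulting reduction $V^{H}=V^{\langle c\rangle}$ in the trivial-action case; everything after that is a table lookup.
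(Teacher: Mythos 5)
Your proof is correct, and it takes a genuinely different route from the paper's. The paper works row by row with the averaging formula $\dim V^{\langle zc\rangle}=\frac{1}{2q}\sum_{k=0}^{2q-1}\chi_V((zc)^k)$, evaluating the sums via $\chi(zc)=\frac{\chi(z)}{\chi(1)}\chi(c)$ and invoking monotonicity ($\dim V^{\langle zc\rangle}\leq\dim V^{\langle z\rangle}$, resp.\ $\leq\dim V^{\langle c\rangle}$) for the rows whose $\langle z\rangle$- or $\langle c\rangle$-fixed space vanishes; the one delicate point, $\xi_1,\xi_2$ when $q\equiv 1\Mod{4}$ (where $\xi_i(c)\neq\xi_i(d)$, so the conjugacy classes of the powers $(zc)^k$ would have to be tracked), is handled indirectly by computing $\dim(\xi_1+\xi_2)^{\langle zc\rangle}=2$ and squeezing against the bounds $\dim\xi_i^{\langle zc\rangle}\leq\dim\xi_i^{\langle c\rangle}=1$. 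You instead decompose $H=\langle z\rangle\times\langle c\rangle$ (valid: $(zc)^2=c^2$ generates $\langle c\rangle$ since $q$ is odd, and $(zc)^q=z$) and note that $\rho(z)$, being an involution whose trace satisfies $|\chi_V(z)|=\chi_V(\mathbf 1)$ in every row of the real tables, must be $\pm\mathrm{id}$; hence either $V^H\subseteq V^{\langle z\rangle}=0$ or $V^H=V^{\langle c\rangle}$, with the latter values quoted from the preceding lemma. This buys uniformity: every row, including the troublesome $\xi_1,\xi_2$ case, falls to the same dichotomy with no summation over $H$ at all, whereas the paper's computations are self-contained at each row but need the ad hoc $\xi_1+\xi_2$ trick. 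The underlying fact is shared --- the identity $\chi(zc)=\frac{\chi(z)}{\chi(1)}\chi(c)$ the paper uses is precisely the statement that $z$ acts as the scalar $\pm 1$ --- but you exploit it structurally rather than numerically. One small caution: your aside that $\chi_V(z)=\pm\chi_V(\mathbf 1)$ ``must'' hold because $z$ is central is slightly glib for real irreducibles, whose commutant may be $\mathbb{C}$ or $\mathbb{H}$ rather than $\mathbb{R}$; your trace argument renders this harmless (and in fact an involution in any of these division algebras is $\pm 1$), but the table lookup, not centrality, is what carries the step.
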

\begin{proof}
We prove only the case $H=\langle zc\rangle$ - the other is analogous.

We have
$$
\dim V^{\langle zc\rangle}=\frac{1}{2q}\sum_{k=0}^{2q-1}\chi_{\raisebox{-.5ex}{$\scriptstyle V$}}((zc)^k)=\frac{1}{2q}(\chi_{\raisebox{-.5ex}{$\scriptstyle V$}}(1)+\chi_{\raisebox{-.5ex}{$\scriptstyle V$}}(z)+\sum_{k=1,k\neq q}^{2q-1}\chi_{\raisebox{-.5ex}{$\scriptstyle V$}}((zc)^k))
$$
Hence, since $\chi(zc)=\frac{\chi(z)}{\chi(1)}\chi(c)$,
$$
\dim\psi^{\langle zc\rangle}=\frac{1}{2q}(q+q+0)=1
$$
$$
\dim\chi_{2i}^{\langle zc\rangle}=\frac{1}{2q}((q+1)+(q+1)+2q-2)=2
$$
$$
\dim(2\chi_{2i+1})^{\langle zc\rangle}\leq\dim(2\chi_{2i+1})^{\langle z\rangle}=0\Rightarrow \dim(2\chi_{2i+1})^{\langle zc\rangle}=0
$$
$$
\dim\theta_{2j}^{\langle zc\rangle}=\frac{1}{2q}((q-1)+(q-1)-2q+2)=0
$$
$$
\dim(2\theta_{2j+1})^{\langle zc\rangle}\leq\dim(2\theta_{2j+1})^{\langle z\rangle}=0
$$
Now, assume $q\equiv 1 \Mod{4}$. Notice that
$$
(\xi_1+\xi_2)(zd)=(\xi_1+\xi_2)(zc)=(\xi_1+\xi_2)(d)=(\xi_1+\xi_2)(c)=1
$$
Therefore,
\begin{eqnarray}\label{eqnarray:2}
\dim(\xi_1+\xi_2)^{\langle zc\rangle}=\frac{1}{2q}\cdot\Big(2\cdot\frac{q+1}{2}+2\cdot\frac{q+1}{2}+2q-2\Big)=2
\end{eqnarray}
On the other hand, we know that $\dim\xi_1^{\langle c\rangle}=\dim\xi_2^{\langle c\rangle}=1$ and $\dim\xi_i^{\langle zc\rangle}\leq\dim\xi_i^{\langle c\rangle}=1$ for $i=1,2$. Thus, in order for (\ref{eqnarray:2}) to be satisfied, we must have $\dim\xi_i^{\langle zc\rangle}=1$ for $i=1,2$. Further,
$
\dim(2\eta_i)^{\langle zc\rangle}\leq \dim(2\eta_i)^{\langle c\rangle}=0
$
for $i=1,2$. Now, let $q\equiv 3 \Mod{4}$. We have then $\dim(2\Rex\xi_1)^{\langle zc\rangle}\leq\dim(2\Rex\xi_1)^{\langle z\rangle}=0$ and $\dim(2\Rex\eta_1)^{\langle zc\rangle}=\frac{1}{2q}((q-1)+(q-1)-(2q-2))=0$
\end{proof}
\begin{lemma}
\emph{Let $1\leq l\leq(q-3)/2$ and $1\leq m\leq(q-1)/2$ be such that $2\nmid(q-1)/(q-1,l)$ and $2\nmid(q+1)/(q+1,m)$ and $H=\langle a^l\rangle$ and $K=\langle b^m\rangle$. Then, for any $1\leq 2i,2i+1\leq(q-3)/2$, $1\leq 2j,2j+1\leq(q-1)/2$, we have $\dim\psi^H=(q-1,l)$, $\dim\chi_{2i}^H=\dim\theta_{2j}^H=\dim(2\eta_1)^H=\dim(2\eta_2)^H=\dim(2\Rex\eta_1)^H=(q-1,l)$, $\dim(2\chi_{2i+1})^H=\dim(2\theta_{2j+1})^H=2(q-1,l)$, $\dim\xi_1^H=\dim\xi_2^H=(q-1,l)/2+1$, $\dim(2\Rex\xi_1)^H=(q-1,l)+2$ and $\dim\psi^K=(q+1,m)-1$, $\dim\chi_{2i}^K=\dim\theta_{2j}^K=\dim(2\Rex\xi_1)^K=(q+1,m)$, $\dim(2\chi_{2i+1})^K=\dim(2\theta_{2j+1})^K=2(q+1,m)$, $\dim\xi_1^H=\dim\xi_2^K=(q+1,m)/2$, $\dim(2\eta_1)^K=\dim(2\Rex\eta_1)^K=(q+1,m)-2$.}
\end{lemma}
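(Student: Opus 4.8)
The plan is to evaluate every fixed-point dimension from the averaging formula already used above, $\dim V^{\langle g\rangle}=\frac{1}{|g|}\sum_{k=0}^{|g|-1}\chi_V(g^k)$ for a real $G$-module $V$ with character $\chi_V$. Write $n=(q-1,l)$ and $o=(q-1)/(q-1,l)=|a^l|=|H|$, and similarly $n'=(q+1,m)$ and $o'=(q+1)/(q+1,m)=|b^m|=|K|$. Since $\langle a^l\rangle=\langle a^{n}\rangle$ and $\langle b^m\rangle=\langle b^{n'}\rangle$, the elements of $H$ are exactly $a^{nk}$ for $0\le k<o$ and those of $K$ are exactly $b^{n'k}$ for $0\le k<o'$. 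The hypotheses say $o$ and $o'$ are odd, so — because $q-1$ and $q+1$ are even — the complementary factors $n,n'$ must be even; I record this parity at the outset, as it is what annihilates the alternating-sign contributions later.

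The first structural step is to locate the powers $a^{nk},b^{n'k}$ among the conjugacy classes, and here the oddness of $|H|,|K|$ does the essential work. By the argument in Corollary \ref{corollary:2}, $z$ is the unique involution of $\SL_2(q)$, and an odd-order cyclic group contains no involution, so $z\notin H$ and $z\notin K$. Consequently every nonidentity element of $H$ lies in some class $(a^t)$ and every nonidentity element of $K$ in some class $(b^s)$; in particular $H$ and $K$ avoid all of $(z),(c),(d),(zc),(zd)$. This is the feature that makes these subgroups much easier than $\langle c\rangle$ or $\langle zc\rangle$: the delicate $c,d$ columns of the real character tables — and with them the quadratic-residue bookkeeping of Proposition \ref{proposition:2} — never enter, and in particular $\xi_1,\xi_2$, which differ only on $c,d$, yield identical dimensions on $H$ and on $K$.

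Substituting the table values into the averaging formula then reduces each case to one of two elementary sums. For the rows whose torus values have the form $\nu_{q-1}^{\,r\cdot}$ or $\nu_{q+1}^{\,r\cdot}$ (the characters $\chi_{2i},2\chi_{2i+1},\theta_{2j},2\theta_{2j+1}$, and the toral part of $\xi,\eta$), writing $\zeta_{q-1}^{\,n}=\zeta_o$ turns the sum into a geometric one,
$$\sum_{k=0}^{o-1}\nu_{q-1}^{\,rnk}=\sum_{k=0}^{o-1}\bigl(\zeta_o^{\,rk}+\zeta_o^{-rk}\bigr)=\begin{cases}2o & o\mid r,\\ 0 & o\nmid r,\end{cases}$$
and similarly with $o'$ in place of $o$. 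Dropping the $k=0$ term $\nu^0=2$ supplies the $-2$ correction that converts $\chi_V(\mathbf 1)$ into the stated answer; for instance $\chi_{2i}(\mathbf 1)+\sum_{k=1}^{o-1}\nu_{q-1}^{2ink}=(q+1)+(0-2)=q-1$, whence $\dim\chi_{2i}^H=(q-1)/o=n$. For the rows whose toral values are $\pm1$ up to sign, i.e. $(-1)^l$ for $\xi_i,2\Rex\xi_1$ and $(-1)^{m+1}$ for $2\eta_i,2\Rex\eta_1$, the even parity of $n,n'$ gives $(-1)^{nk}=(-1)^{n'k}=1$, so the sum collapses to $o-1$ (resp. $o'-1$) equal terms; e.g. $\dim\xi_1^H=\tfrac1o\bigl(\tfrac{q+1}{2}+(o-1)\bigr)=\tfrac{q-1}{2o}+1=\tfrac n2+1$.

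Carrying this out line by line for each real irreducible character reproduces all the entries of the statement; the $b^m$-rows differ from the $a^l$-rows only through the constant carried off the identity (for example $\psi(b^m)=-1$ against $\psi(a^l)=1$, and $2\eta_1(b^m)=2(-1)^{m+1}$), which is precisely what turns $(q+1,m)$-type answers into $(q+1,m)-1$ or $(q+1,m)-2$. The step demanding the most care — and the real crux — is the vanishing clause $o\nmid r$ in the geometric-sum identity: one must verify that for the indices occurring in the tables, namely $r=2i$ with $1\le 2i\le(q-3)/2$ and $r=2j$ with $1\le 2j\le(q-1)/2$, one has $o\nmid 2i$ and $o'\nmid 2j$ (equivalently, since $o,o'$ are odd, $o\nmid i$ and $o'\nmid j$), so that the $\nu$-sums genuinely collapse to $0$ and the $-2$ correction is the only surviving contribution. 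Once this index bookkeeping is secured, each displayed dimension follows by direct substitution.
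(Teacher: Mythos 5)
You follow the paper's route exactly---averaging over $H$ and $K$, excluding $z$ because the orders are odd, reducing the $\nu$-rows to geometric sums, and using the evenness of $(q-1,l)$ and $(q+1,m)$ to make the sign rows constant---and you correctly isolate the one dangerous step: the collapse $\sum_{k=1}^{o-1}\nu_o^{rk}=-2$ requires $o\nmid r$. But you only flag this verification (``once this index bookkeeping is secured'') and never perform it, and that is a genuine gap, because the required non-divisibility is \emph{false} within the lemma's stated ranges. Take $q=19$ and $l=6$: then $(q-1,l)=6$ and $o=(q-1)/(q-1,l)=3$ is odd, so the hypotheses hold and $H=\{\mathbf{1},a^{6},a^{12}\}$. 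For $i=3$, admissible since $2i=6\leq(q-3)/2=8$, one has $\chi_{6}(a^{6})=\chi_{6}(a^{12})=\nu_{18}^{36}=2$, hence $\dim\chi_{6}^{H}=\tfrac{1}{3}(20+2+2)=8\neq 6=(q-1,l)$. In general, whenever $o\mid i$ the geometric sum contributes $2(o-1)$ instead of $-2$, giving $\dim\chi_{2i}^{H}=(q-1,l)+2$; likewise $o\mid 2i+1$ gives $\dim(2\chi_{2i+1})^{H}=2(q-1,l)+4$ (e.g.\ $2i+1=3$ in the same example yields $16$, not $12$), and on $K$ the cases $o'\mid j$ and $o'\mid 2j+1$ give $\dim\theta_{2j}^{K}=(q+1,m)-2$ and $\dim(2\theta_{2j+1})^{K}=2(q+1,m)-4$.

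So the step you defer cannot be closed as stated: it fails for infinitely many admissible triples, and with it several entries of the lemma fail, so no amount of bookkeeping rescues the computation without modifying the statement (one must either add the hypotheses $o\nmid i$, $o\nmid 2i+1$, $o'\nmid j$, $o'\nmid 2j+1$, or amend the affected entries by the corrections above). For what it is worth, the paper's own proof commits the same error silently: it applies the identity $\sum_{t=1}^{n-1}\zeta_{n}^{tk}=-1$ ``for $k\neq 0$'' with the exponent $k=2il/(q-1,l)$, which can be $\equiv 0\Mod{(q-1)/(q-1,l)}$---your proposal at least makes the hidden hypothesis explicit before failing to discharge it. Everything else in your argument (the exclusion of $z$ from $H$ and $K$, the parity argument making the $(-1)^{l}$ and $(-1)^{m+1}$ rows constant on $H$ and $K$, and the equality of the $\xi_{1}$- and $\xi_{2}$-dimensions because those characters differ only on the classes $(c),(d)$, which $H$ and $K$ avoid) agrees with the paper's proof and is sound.
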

\begin{proof}
Since $2\neq(q-1)/(q-1,l)$ and $2\neq(q+1)/(q+1,m)$, it follows that $z\notin H,K$. Hence,
$$
\dim\psi^H=\frac{(q-1,l)}{q-1}\cdot\Big(q+\Big(\frac{q-1}{(q-1,l)}-1\Big)\cdot 1\Big)=(q-1,l)
$$
and
$$
\dim\psi^K=\frac{(q+1,m)}{q+1}\cdot\Big(q+\Big(\frac{q+1}{(q+1,m)}-1\Big)\cdot (-1)\Big)=(q+1,m)-1
$$
Since $\nu_r^s=\nu_r^{r-s}$ for $s\leq r$,  $\nu_r^s=\nu_r^{s\Mod{r}}$ and $\sum_{i=1}^{n-1}\zeta_n^{ik}=-1$ if $k\neq 0$ and $n>1$, we have,
\begin{eqnarray*}
\dim\chi_{2i}^H=&\frac{(q-1,l)}{q-1}\cdot\Big(q+1+\sum_{j=1}^{(q-1)/(q-1,l)-1}\nu_{q-1}^{2ijl}\Big)\\
=&\frac{(q-1,l)}{q-1}\cdot\Big(q+1+\sum_{j=1}^{(q-1)/(q-1,l)-1}\nu_{(q-1)/(q-1,l)}^{j\cdot(2il/(q-1,l))}\Big)\\
=&\frac{(q-1,l)}{q-1}\cdot(q+1-2)=(q-1,l)
\end{eqnarray*}
Now,
$$
\dim\chi_{2i}^K=\frac{(q+1,m)}{q+1}\cdot\Big(q+1+\Big(\frac{q+1}{(q+1,m)}-1\Big)\cdot 0\Big)=(q+1,m)
$$
Similarly,
\begin{align*}
\dim(2\chi_{2i+1})^H=&\frac{(q-1,l)}{q-1}\cdot\Big(2q+2+2\cdot\Big(\sum_{j=1}^{(q-1)/(q-1,l)-1}\nu_{q-1}^{(2i+1)jl}\Big)\Big)\\
=&\frac{(q-1,l)}{q-1}\cdot\Big(2q+2+2\sum_{j=1}^{(q-1)/(q-1,l)-1}\nu_{(q-1)/(q-1,l)}^{j\cdot((2i+1)l/(q-1,l))}\Big)\\
=&\frac{(q-1,l)}{q-1}\cdot(2q+2+2\cdot (-2))=2(q-1,l)
\end{align*}
and
$$
\dim(2\chi_{2i+1})^K=\frac{(q+1,m)}{q+1}(2q+2)=2(q+1,m)
$$
Further,
$$
\dim\theta_{2j}^H=\frac{(q-1,l)}{q-1}\cdot(q-1)=(q-1,l)
$$
\begin{align*}
\dim\theta_{2j}^K=&\frac{(q+1,m)}{q+1}\cdot\Big(q-1-\sum_{i=1}^{(q+1)/(q+1,m)}\nu_{q+1}^{2ijm}\Big)\\
=&\frac{(q+1,m)}{q+1}\cdot\Big(q-1-(-2)\Big)=(q+1,m)
\end{align*}
$$
\dim(2\theta_{2j+1})^H=\frac{(q-1,l)}{q-1}\cdot(2q-2)=2(q-1,l)
$$
\begin{align*}
\dim(2\theta_{2j+1})^K=&\frac{(q+1,m)}{q+1}\cdot\Big(2q-2-2\sum_{i=1}^{(q+1)/(q+1,m)}\nu_{q+1}^{i\cdot(2j+1)m}\Big)\\
=&\frac{(q+1,m)}{q+1}\cdot\Big(2q-2-2\cdot(-2)\Big)=2(q+1,m)
\end{align*}
Assume $q\equiv 1 \Mod{4}$. Then, noticing that $2|l$ and $2|m$, we conldude that the characters of $\xi_1$ and $\xi_2$ evaluated on $a^{il}$ are equal $1$ for any $i=1,\ldots,(q-1)/(q-1,l)-1$. This is because $a^{il}=a^k$, where $k=il\Mod{q-1}$, since the order of $a$ is $q-1$. Moreover, if $k>(q-3/2)$, then $a^k$ is conjugate to $a^{q-1-k}$ and $q+1-k\leq(q-3)/2$ since $z\notin H$. If we denote by $k'$ the minimum of $k$ and $q-1-k$, it follows then that the characters of $\xi_1$ and $\xi_2$ are both equal $(-1)^{k'}$ on $a^{il}$. Since $q-1$ and $l$ are even, we conclude that $k'$ is even as well. Therefore, we have
$$
\dim\xi_1^H=\dim\xi_2^H=\frac{(q-1,l)}{q-1}\cdot\Big(\frac{q+1}{2}+\frac{q-1}{(q-1,l)}-1\Big)=\frac{1}{2}(q-1,l)+1
$$
$$
\dim\xi_1^K=\dim\xi_2^K=\frac{(q+1,m)}{q+1}\cdot\frac{q+1}{2}=\frac{1}{2}(q+1,m)
$$
$$
\dim(2\eta_1)^H=\dim(2\eta_2)^H=\frac{(q-1,l)}{q-1}\cdot(q-1)=(q-1,l)
$$
Similarly as before, we notice that the characters of $2\eta_1$ and $2\eta_2$ evaluated on $b^{jm}$ for $j=1,\ldots,(q+1)/(q+1,m)-1$ are equal to $-1$. Hence
\begin{align*}
\dim(2\eta_1)^K=&\dim(2\eta_2)^K=\frac{(q+1,m)}{q+1}\cdot\Big(q-1+2\cdot\Big(-\frac{q+1}{(q+1,m)}+1\Big)\Big)\\
=&(q+1,m)-2
\end{align*}
If $q\equiv 3 \Mod{4}$, then, by the analogous reasoning, the characters of $2\Rex\xi_1$ and $2\Rex\eta_1$ evaluated on $a^{il}$ and $b^{jm+1}$ are equal $1$ and $-1$ respectively for $1\leq i\leq(q-1)/(q-1,l)-1$ and $1\leq j\leq(q+1)/(q+1,m)-1$. Thus
$$
\dim(2\Rex\xi_1)^H=\frac{(q-1,l)}{q-1}\cdot\Big(q+1+2\cdot\Big(\frac{q-1}{(q-1,l)}-1\Big)\Big)=(q-1,l)+2
$$
$$
\dim(2\Rex\xi_1)^K=\frac{(q+1,m)}{q+1}\cdot(q+1)=(q+1,m)
$$
$$
\dim(2\Rex\eta_1)^H=\frac{(q-1,l)}{q-1}\cdot(q-1)=(q-1,l)
$$
$$
\dim(2\Rex\eta_1)^K=\frac{(q+1,m)}{q+1}\cdot\Big(q-1+2\cdot\Big(-\frac{q+1}{(q+1,m)}+1\Big)\Big)=(q+1,m)-2
$$
\end{proof}
\begin{lemma}
\emph{Let $1\leq l\leq(q-3)/2$ and $1\leq m\leq(q-1)/2$ be such that $2|(q-1)/(q-1,l)$ and $2|(q+1)/(q+1,m)$ and $H=\langle a^l\rangle$ and $K=\langle b^m\rangle$. Then, for any $1\leq 2i,2i+1\leq(q-3)/2$, $1\leq 2j,2j+1\leq(q-1)/2$, we have $\dim\psi^H=2(q-1,l)+1$, $\dim\chi_{2i}^H=\dim\theta_{2j}^H=\dim(2\Rex\eta_1)^H=2(q-1,l)$, $\dim(2\chi_{2i+1})^{H}=\dim(2\theta_{2j+1})^{H}=\dim(2\eta_1)^{H,K}=\dim(2\eta_2)^{H}=\dim(2\Rex\xi_1)^{H}=0$, $\dim\xi_1^H=\dim\xi_2^H=(q-1,l)+1$ for $2|l$ and $\dim\xi_1^H=\dim\xi_2^H=(q-1,l)$ for $2\nmid l$ and $\dim\psi^K=2(q+1,m)-1$, $\dim\chi_{2i}^K=\dim\theta_{2j}^K=2(q+1,m)$, $\dim(2\chi_{2i+1})^K=\dim(2\theta_{2j+1})^K=\dim(2\eta_1)^K=\dim(2\eta_2)^K=\dim(2\Rex\xi_1)^K=0$, $\dim\xi_1^K=\dim\xi_2^K=(q+1,m)$, $\dim(2\Rex\eta_1)^K=2(q+1,m)-2$ for $2|m$ and $\dim(2\Rex\eta_1)^K=2(q+1,m)$ for $2\nmid m$.
}
\end{lemma}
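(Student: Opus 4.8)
The plan is to evaluate each fixed point dimension through the averaging formula $\dim V^{H}=\frac{1}{|H|}\sum_{h\in H}\chi_{V}(h)$, exactly as in the previous two lemmas, but now exploiting the one structural feature that distinguishes the present hypotheses from those of the preceding lemma. Writing $n_{1}=(q-1)/(q-1,l)=|H|$ and $n_{2}=(q+1)/(q+1,m)=|K|$, the conditions $2\mid n_{1}$ and $2\mid n_{2}$ force the central involution into both subgroups: $a^{(q-1)/2}$ and $b^{(q+1)/2}$ are elements of order $2$, hence each equals $z$, the unique involution of $\SL_2(q)$ (Corollary \ref{corollary:2}), and $z=a^{(q-1)/2}\in\langle a^{l}\rangle$ precisely when $(q-1,l)\mid(q-1)/2$, i.e. when $n_{1}$ is even, with the same statement for $K$. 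So I would begin by recording $z\in H$ and $z\in K$, listing the elements of $H$ as $a^{jl}$, $0\le j\le n_{1}-1$ (those of $K$ as $b^{jm}$), and singling out $j=0$ giving $\mathbf{1}$ and the unique index $j=j_{0}$ giving $z$.

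First I would clear out every vanishing entry in one stroke. Reading the $z$-column of the real character tables, a real irreducible $\chi$ satisfies $\chi(z)=-\chi(\mathbf{1})$ exactly for $2\chi_{2i+1}$, $2\theta_{2j+1}$, and, for $q\equiv1\Mod 4$, $2\eta_{1},2\eta_{2}$, and, for $q\equiv3\Mod 4$, $2\Rex\xi_{1}$. For such $\chi$ the central involution acts as $-\mathrm{id}$ on the whole representation, so its fixed subspace is already trivial; since $\langle z\rangle\le H$ and $\langle z\rangle\le K$, the corresponding fixed spaces under $H$ and $K$ are trivial as well. This disposes of all the dimensions asserted to be $0$ without any further computation.

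For the remaining characters, where $\chi(z)=\chi(\mathbf{1})$, I would split the average into the two central contributions $\chi(\mathbf{1})+\chi(z)=2\chi(\mathbf{1})$ and the sum over the $n_{i}-2$ genuinely toral elements. Several cases are immediate because the character is constant or identically zero off the centre: $\psi$ takes the value $1$ on $\langle a\rangle$ and $-1$ on $\langle b\rangle$, $\chi_{2i}$ vanishes on $\langle b\rangle$ and $\theta_{2j}$ on $\langle a\rangle$, $\xi_{1},\xi_{2}$ vanish on $\langle b\rangle$ for $q\equiv1$, and $2\Rex\eta_{1}$ vanishes on $\langle a\rangle$ for $q\equiv3$; dividing $2\chi(\mathbf{1})$ plus a trivial constant sum by $n_{i}$ gives the stated value in each case. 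The two cases carrying genuine roots of unity are $\chi_{2i}$ on $H$ and $\theta_{2j}$ on $K$, where I would collapse $\sum_{k}\nu_{q-1}^{2ik(q-1,l)}$ (using $\langle a^{l}\rangle=\langle a^{(q-1,l)}\rangle$) by the geometric identity $\sum_{k=0}^{n-1}\zeta_{n}^{tk}=0$ for $n\nmid t$, being careful to replace the ``toral'' reading $2$ of the value at $j=j_{0}$ by the true central value already accounted for. Finally, for $\xi_{1},\xi_{2}$ on $H$ (when $q\equiv1$) and $2\Rex\eta_{1}$ on $K$ (when $q\equiv3$) the off-centre values are the signs $(-1)^{jl}$ and $2(-1)^{jm+1}$; summing these needs the parity of $j_{0}$, which I would pin down from the fact that $(q-1)/2$, respectively $(q+1)/2$, is even in the relevant congruence class, so that $\sum_{j}(-1)^{j}=0$ over the even-order group produces exactly the split between $2\mid l$ and $2\nmid l$, respectively $2\mid m$ and $2\nmid m$.

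The main obstacle is the evaluation of the toral sums for $\chi_{2i}$ on $H$ and $\theta_{2j}$ on $K$: the clean answers $2(q-1,l)$ and $2(q+1,m)$ rest on the vanishing $\sum_{k=0}^{n-1}\zeta_{n}^{2ik}=0$, which requires $n_{1}\nmid 2i$, respectively $n_{2}\nmid 2j$. When this divisibility fails the character restricts trivially to the subgroup, the sum picks up the extra mass $2n$, and the value changes; I would therefore regard the argument as establishing the stated dimensions for the generic indices and isolate the exceptional indices $n_{1}\mid 2i$ and $n_{2}\mid 2j$ for separate scrutiny. The parity bookkeeping for $\xi_{1},\xi_{2}$ and $2\Rex\eta_{1}$ is the secondary delicate point, but it becomes routine once the index $j_{0}$ of $z$ is shown to be even.
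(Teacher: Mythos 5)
Your route coincides with the paper's own proof at every step it covers: you record $z\in H,K$ from $2\mid n_1,n_2$, you dispose of $2\chi_{2i+1}$, $2\theta_{2j+1}$, $2\eta_1$, $2\eta_2$ and $2\Rex\xi_1$ in one stroke via $\chi(z)=-\chi(\mathbf{1})$ and $\langle z\rangle\le H,K$ (the paper does exactly this, citing the earlier $\langle z\rangle$ computations), you average the characters that are constant or zero off the centre directly, you correct the $j_0$-term when collapsing the geometric sums for $\chi_{2i}$ on $H$ and $\theta_{2j}$ on $K$ (the paper's subtraction of $\nu_{(q-1,l)}^{il}=2$, resp. $\nu_{(q+1,m)}^{jm}=2$), and you settle $\xi_1,\xi_2$ and $2\Rex\eta_1$ by the parity of the index $j_0$ of $z$, forced even by $q\equiv 1$ resp. $3\Mod{4}$ when $l$ resp. $m$ is odd — again exactly the paper's argument (note only that the relevant quantity is $(q-1)/(2(q-1,l))$, resp. $(q+1)/(2(q+1,m))$, not $(q\mp1)/2$ itself; evenness follows because $(q-1,l)$ is odd when $2\nmid l$ and $4\mid q-1$, and similarly in the other case).

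The ``main obstacle'' you flag is not a defect of your write-up but a genuine error that the paper's proof silently commits. The paper asserts unconditionally that $\sum_{j=1}^{n_1-1}\nu_{q-1}^{2ijl}=-2$, but writing $l=(q-1,l)\,l'$ with $(l',n_1)=1$, this vanishing requires $n_1\nmid 2i$ (the identity $\sum_{i=1}^{n-1}\zeta_n^{ik}=-1$ needs $n\nmid k$, not merely $k\neq 0$); likewise $\theta_{2j}$ on $K$ needs $n_2\nmid 2j$. These conditions can fail inside the lemma's stated ranges. Concretely, take $q=13$ and $l=3$, so $H=\langle a^3\rangle$ has order $n_1=4$ and contains $z=a^6$, and take $2i=4\le(q-3)/2$: then $\chi_4(\mathbf{1})=\chi_4(z)=14$ while $\chi_4(a^3)=\chi_4(a^9)=\nu_{12}^{12}=2$, so $\dim\chi_4^H=\frac{1}{4}(14+14+2+2)=8$, not the claimed $2(q-1,l)=6$; in general the exceptional value is $2(q-1,l)+2$. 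Dually, $q=11$, $m=3$, $2j=4$ gives $\dim\theta_4^{\langle b^3\rangle}=\frac{1}{4}(10+10-2-2)=4\neq 6=2(q+1,m)$, the exceptional value being $2(q+1,m)-2$. So your decision to prove the formulas for the generic indices and quarantine $n_1\mid 2i$, $n_2\mid 2j$ is exactly right: the lemma as stated is false at those indices, and the same unguarded use of the geometric-sum identity also affects the preceding lemma (the odd-order case). Your proposal therefore matches the paper's method and is, where it commits itself, more careful than the paper.
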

\begin{proof}
We use the similar arguments as in the case $(q-1)/(q-1,l)$ and $(q+1)/(q+1,m)$ were odd. In case they are even, we notice that $z\in H,K$. Thus, we have
$$
\dim\psi^H=\frac{(q-1,l)}{q-1}\cdot\Big(q+q+\Big(\frac{q-1}{(q-1,l)}-2\Big)\cdot 1\Big)=2(q-1,l)+1
$$
$$
\dim\psi^K=\frac{(q+1,m)}{q+1}\cdot\Big(q+q+\Big(\frac{q+1}{(q+1,m)}-2\Big)\cdot (-1)\Big)=2(q+1,m)-1
$$
$$
\dim\chi_{2i}^H=\frac{(q-1,l)}{q-1}\Big(q+1+q+1+\Big(\Big(\sum_{j=1}^{(q-1)/(q-1,l)-1}\nu_{q-1}^{2ijl}\Big)-\nu_{q-1}^{2i\cdot(q-1)/(2\cdot(q-1,l))\cdot l}\Big)\Big)
$$
$$
=\frac{(q-1,l)}{q-1}\cdot\Big(2q+2+\Big(\Big(\sum_{j=1}^{(q-1)/(q-1,l)-1}\nu_{q-1}^{2ijl}\Big)-\nu_{(q-1,l)}^{il}\Big)\Big)
$$
$$=\frac{(q-1,l)}{q-1}\cdot(2q+2+(-2-2))=2(q-1,l)
$$
$$
\dim\chi_{2i}^K=\frac{(q+1,m)}{q+1}(q+1+q+1)=2(q+1,m)
$$
$$
\dim\theta_{2j}^H=\frac{(q-1,l)}{q-1}\cdot(q-1+q-1)=2(q-1,l)
$$
\begin{align*}
\dim\theta_{2j}^K=&\frac{(q+1,m)}{q+1}\cdot\Big(q-1+q-1\\
-&\Big(\Big(\sum_{i=1}^{(q+1)/(q+1,m)-1}\nu_{q+1}^{2ijm}\Big)-\nu_{q+1}^{2\cdot(q+1)/(2(q+1,m)\cdot jm}\Big)\Big)\\
=&\frac{(q+1,m)}{q+1}\cdot\Big(2q-2-\Big(\Big(\sum_{i=1}^{(q+1)/(q+1,m)-1}\nu_{q+1}^{2ijm}\Big)-\nu_{(q+1,m)}^{jm}\Big)\Big)\\
=&\frac{(q+1,m)}{q+1}\cdot\Big(2q-2-(-2-2)\Big)=2(q+1,m)
\end{align*}
Since $\langle z\rangle\leq H, K$ and $\dim(2\chi_{2i+1})^{\langle z\rangle}=\dim(2\theta_{2j+1})^{\langle z\rangle}=0$, we get
$$
\dim(2\chi_{2i+1})^H=\dim(2\chi_{2i+1})^K=\dim(2\theta_{2j+1})^H=\dim(2\theta_{2j+1})^K=0
$$
Assume $q\equiv 1 \Mod{4}$. Since $z\in H,K$ and $\dim(2\eta_1)^{\langle z\rangle}=\dim(2\eta_2)^{\langle z\rangle}$, we have
$$
\dim(2\eta_1)^H=\dim(2\eta_1)^K=\dim(2\eta_2)^H=\dim(2\eta_1)^K=0
$$
Let us consider the characters $\xi_1$ and $\xi_2$. From the real character table we see that their fixed point dimensions for subgroups $H$ and $K$ are equal. Let us consider the case $2|l$. Then, as before, the characters of $\xi_1$ and $\xi_2$ evaluated on $a^{il}$ are equal $1$ for $1\leq i\leq (q-1)/(q-1,l)-1$. Thus, in this case,
$$
\dim\xi_1^H=\dim\xi_2^H=\frac{(q-1,l)}{q-1}\cdot\Big(\frac{q+1}{2}+\frac{q+1}{2}+\frac{q-1}{(q-1,l)}-2\Big)=(q-1,l)+1
$$
In the second case, when $2\nmid l$, we observe that the characters $\xi_1$ and $\xi_2$ evaluated on $a^{il}$ are eeual to $1$ when $i$ is odd and to $-1$ when $i$ is even. Thus, the sum $\sum_{i=1,i\neq (q-1)/(2(q-1,l))}^{(q-1)/(q-1,l)-1}\xi_{1,2}(a^{il})$ is equal to $-2$, since $(q-1)/(2(q-1,l))$ is even because $q\equiv 1\Mod{4}$, so
$$
\dim\xi_1^H=\dim\xi_2^H=\frac{(q-1,l)}{q-1}\cdot\Big(\frac{q+1}{2}+\frac{q+1}{2}-2\Big)=(q-1,l)
$$
Moreover, in both cases,
$$
\dim\xi_1^K=\dim\xi_2^K=\frac{(q+1,m)}{q+1}\Big(\frac{q+1}{2}+\frac{q+1}{2}\Big)=(q+1,m)
$$
If $q\equiv 3 \Mod{4}$, then, using the same arguments as in the case $q\equiv 1 \Mod{4}$, we get $\dim(2\Rex\xi_1)^H=\dim(2\Rex\xi_1)^K=0$. Further,
$$
\dim(2\Rex\eta_1)^H=\frac{(q-1,l)}{q-1}\cdot(q-1+q-1)=2(q-1,l)
$$
and, if $2|m$,
\begin{eqnarray*}
\dim(2\Rex\eta_1)^K=&\frac{(q+1,m)}{q+1}\cdot\Big(q-1+q-1+2\cdot\Big(-\frac{q+1}{(q+1,m)}+2\Big)\Big)\\
=&2(q+1,m)-2
\end{eqnarray*}
whereas, if $2\nmid m$, then, since $q\equiv 3\Mod{4}$, we have $2| (q+1)/(2(q+1,m))$ and $\sum_{j=1,j\neq (q+1)/(2(q+1,m))}^{(q+1)/(q+1,m)-1}(2\Rex\eta_1)(b^{jm})=4$. Thus,
$$
\dim(2\Rex\eta_1)^K=\frac{(q+1,m)}{q+1}\cdot(q-1+q-1+4)=2(q+1,m)
$$
\end{proof}
Let us summarize in the following tables the dimensions of cyclic subgroups of $\SL_2(q)$ for real irreducible representations,
\begin{center}
\begin{tabular}{l|l*{3}{c}r}
            &  $\mathbf{1}$& $\langle z\rangle$ & $\langle c\rangle=\langle d\rangle$ & $\langle zc\rangle=\langle zd\rangle$ \\
\hline
$\mathds{1}$ & 1 & 1 & 1 & 1  \\
$\psi$   & $q$ & $q$ & $1$ & $1$  \\
$\chi_{2i}$   & $q+1$ & $q+1$ & $2$ & $2$   \\
$2\chi_{2i+1}$   & $2q+2$ & $0$ & $4$ & $0$   \\
$\theta_{2j}$    & $q-1$ & $q-1$ & $0$ & $0$  \\
$2\theta_{2j+1}$    & $2q-2$ & $0$ & $0$ & $0$   \\
$\xi_1=\xi_2$     & $\frac{1}{2}(q+1)$ & $\frac{1}{2}(q+1)$ & $1$ & $1$ \\
$2\eta_1=2\eta_2$    &$ q-1 $&$ 0 $&$0 $&$ 0$ \\
$2\Rex\xi_1$     & $q+1$ & $0$ & $2$ & $0$  \\
$2\Rex\eta_1$     & $q-1$ & $q-1$ & $ 0 $&$0$ 
\end{tabular}
\end{center}
and, if we put $H=\langle a^l\rangle$, $K=\langle b^m\rangle$ for $2\nmid\frac{q-1}{(q-1,l)}, \frac{q+1}{(q+1,m)}$, $H'=\langle a^l\rangle$, $K'=\langle b^m\rangle$ for $2 |\frac{q-1}{(q-1,l)}, \frac{q+1}{(q+1,m)}$ and $2|l,m$ and $H''=\langle a^l\rangle$, $K''=\langle b^m\rangle$ for $2|\frac{q-1}{(q-1,l)}, \frac{q+1}{(q+1,m)}$ and $2\nmid l,m$, then
\begin{center}
\begin{tabular}{l|{c}r}
            &  $H$& $K$ \\
\hline
$\mathds{1}$ & 1 & 1 \\
$\psi$   & $(q-1,l)+1$ & $(q+1,m)-1$  \\
$\chi_{2i}$   & $(q-1,l)$ & $(q+1,m)$  \\
$2\chi_{2i+1}$   & $2(q-1,l)$ & $2(q+1,m)$  \\
$\theta_{2j}$    & $(q-1,l)$ & $(q+1,m)$  \\
$2\theta_{2j+1}$    & $2(q-1,l)$ & $2(q+1,m)$ \\
$\xi_1=\xi_2$     & $\frac{(q-1,l)}{2}+1$ & $\frac{(q+1,m)}{2}$  \\
$2\eta_1=2\eta_2$    &$ (q-1,l) $&$ (q+1,m)-2 $ \\
$2\Rex\xi_1$     & $(q-1,l)+2$ & $(q+1,m)$   \\
$2\Rex\eta_1$     & $(q-1,l)$ & $(q+1,m)-2$  
\end{tabular}
\end{center}
and
\begin{center}
\begin{tabular}{l|l*{2}{c}r}
            &  $H'$ & $K'$ &$H''$&$K''$\\
\hline
$\mathds{1}$ & 1 & 1 &1&1 \\
$\psi$    & $2(q-1,l)+1$ & $2(q+1,m)-1$&2(q-1,l)+1&2(q+1,m)-1  \\
$\chi_{2i}$    & $2(q-1,l)$ & $2(q+1,m)$&  $2(q-1,l)$ & $2(q+1,m)$ \\
$2\chi_{2i+1}$    & $0$ & $0$&$0$&$0$   \\
$\theta_{2j}$     & $2(q-1,l)$ & $2(q+1,m)$&$2(q-1,l)$&$2(q+1,m)$  \\
$2\theta_{2j+1}$     & $0$ & $0$   &0&0\\
$\xi_1=\xi_2$      & $(q-1,l)+1$ & $(q+1,m)$&$(q-1,l)$&$(q+1,m)$ \\
$2\eta_1=2\eta_2$    &$0 $&$ 0$&$0$&$0$ \\
$2\Rex\xi_1$     & $0$ & $0$&$0$&$0$  \\
$2\Rex\eta_1$      & $ 2(q-1,l) $&$2(q+1,m)-2$&$2(q-1,l)$&$2(q+1,m)$ 
\end{tabular}
\end{center}
where $1\leq i,l\leq (q-3)/2$, $1\leq j,m\leq(q-1)/2$, $\epsilon=(-1)^{(q-1)/2}$, $\zeta_r=\exp{2\pi i/r}$, $\nu_r^s=\zeta_r^s+\zeta_r^{-s}$ and $\chi(zc)=\frac{\chi(z)}{\chi(1)}\cdot\chi(c)$ and $\chi(zd)=\frac{\chi(z)}{\chi(1)}\cdot\chi(d)$ for any real irreducible character $\chi$.



\newpage
\bibliographystyle{model1-num-names}
\bibliography{sample.bib}







\end{document}